\title{NS saturated and $\Delta_1$-definable\footnote{MSC Classification: 03E35, 03E45, 03E55, 03E57}} 
\author{Stefan Hoffelner\footnote{The author was supported by FWF-GA\v CR grant no. 
17-33849L, Filters, ultrafilters and connections with forcing. Additional funding by the Deutsche Forschungsgemeinschaft (DFG German Research Foundation) under Germanys Excellence Strategy EXC 2044 390685587, Mathematics M\"unster: Dynamics-Geometry-Structure. }}
\affil{Institut f\"ur Mathematische Logik und Grundlagenforschung \\WWU M\"unster}
\date{23.06.2017}
\begin{document}
\maketitle

\newtheorem{thm}{Theorem}
\newtheorem{Claim}[thm]{Claim}
\newtheorem{Fact}[thm]{Fact}
\newtheorem{Proposition}[thm]{Proposition}
\newtheorem{Definition}[thm]{Definition}
\newtheorem{Assumption}[thm]{Assumption}
\newtheorem{Lemma}[thm]{Lemma}
\newtheorem{Question}{Question}
\newtheorem{Corollary}[thm]{Corollary}
\newtheorem*{Theorem*}{Theorem}

\newcommand{\ZFC}{\mathsf{ZFC}}
\newcommand{\ZF}{\mathsf{ZF}}
\newcommand{\ZFP}{\mathsf{ZFC}^-}
\newcommand{\AC}{\mathsf{AC}}
\newcommand{\PFA}{\mathsf{PFA}}
\newcommand{\BPFA}{\mathsf{BPFA}}
\newcommand{\MRP}{\mathsf{MRP}}
\newcommand{\MM}{\mathsf{MM}}
\newcommand{\BMM}{\mathsf{BMM}}
\newcommand{\MA}{\mathsf{MA}_{\omega_1}}

\newcommand{\SCC}{\mathsf{SCC}}
\newcommand{\CC}{\mathsf{CC}}

\newcommand{\card}{\hbox{card}}
\newcommand{\Card}{\hbox{Card}}
\newcommand{\hcard}{\hbox{hcard}}
\newcommand{\Reg}{\hbox{Reg}}
\newcommand{\GCH}{\mathsf{GCH}}
\newcommand{\CH}{\mathsf{CH}}
\newcommand{\Ord}{\hbox{Ord}}
\newcommand{\Range}{\hbox{Range}}
\newcommand{\Dom}{\hbox{Dom}}
\newcommand{\Ult}{\hbox{Ult}}
\newcommand{\I}{\hbox{I}}
\newcommand{\II}{\hbox{II}}

\newcommand{\mouseM}{\mathcal{M}}
\newcommand{\mouseN}{\mathcal{N}}
\newcommand{\mouseP}{\mathcal{P}}
\newcommand{\treeT}{\mathcal{T}}

\newcommand{\forceQ}{\mathbb{Q}}
\newcommand{\forceP}{\mathbb{P}}
\newcommand{\forceR}{\mathbb{R}}
\newcommand{\seal}{\mathbb{S}(\vec{S})}

\newcommand{\Stat}{\mathcal{S}}
\newcommand{\NSA}{\hbox{NS}_{\omega_1} \upharpoonright A} 
\newcommand{\NS}{\hbox{NS}_{\omega_1}}
\newcommand{\NSK}{\hbox{NS}_{\kappa}}
\newcommand{\K}{K_{\omega_1}}

\newcommand{\secret}[1]{}
\secret{Versteckter Text}

\begin{abstract}
We show that under the assumption of the existence of the canonical inner model with one Woodin cardinal $M_1$, there
is a model of $\ZFC$ in which $\NS$ is $\aleph_2$-saturated
and $\Delta_1$-definable with $\omega_1$ as a 
parameter which answers a question of 
Sy-David Friedman and Liuzhen Wu. We also show that starting from an arbitrary universe with a Woodin cardinal, there is a model with $\NS$ saturated and
$\Delta_1$-definable with a ladder system $\vec{C}$ and a full Suslin tree
$T$ as parameters. Both results rely on a new coding technique whose presentation is the main goal of this article.
\end{abstract}

\section{Preliminaries}

\subsection{Introduction}
The investigation of the nonstationary ideal on $\omega_1$ and its saturation has a long history in
set theory. Recall that $\NS$ being ($\aleph_2$-)saturated means that $P(\omega_1) \slash \NS$ 
seen as a Boolean algebra has the $\aleph_2$-cc.
That $\omega_1$ can carry a normal, $\sigma$-complete and saturated ideal was already noted by K. Kunen in 1970. He obtained the result
assuming the existence of a huge cardinal. Using completely different methods which served as the starting point for the later development of Woodin's $\forceP_{max}$, J. Steel and R. Van Wesep obtained a decade later that already the nonstationary ideal on $\omega_1$ can be saturated. Considering the problem from a very different perspective again, Foreman, Magidor and Shelah showed a couple of years later that Martin's Maximum $\MM$, whose consistency can be derived from a supercompact cardinal, implies outright that $\NS$ is saturated.
Eventually Shelah found that already a Woodin cardinal is sufficient
to force a model in which $\NS$ is saturated, an assumption which turned out to be sharp in terms of consistency strength, as shown in 2006 by R. Jensen and Steel.

There are several interesting and deep interactions, connecting assertions related to "$\NS$ is saturated" with definability properties of certain important families of sets in the $H(\omega_2)$ of the surrounding universe. As a paradigmatic example we mention Woodin's famous result that in the presence of a measurable cardinal, $\NS$ being saturated implies the definable failure of the continuum hypothesis.

Goal of this paper is the proof of the following theorem:

\begin{Theorem*}
 Assume the existence of $M_1$, then there is a model of $\ZFC$ in which
 $\NS$ is $\aleph_2$-saturated and $\Delta_1$-definable with parameter
 $\omega_1$.
\end{Theorem*}

In fact the coding methods we will introduce will work over an arbitrary ground model with a Woodin cardinal once we allow more parameters. So the above theorem is an application of the more general theorem we shall prove in this paper.
\begin{Theorem*}
Assume that $V$ is a universe with a Woodin cardinal, $\vec{C}$ is a ladder system on $\omega_1$ and $T$ is a full Suslin tree. Then there is a generic extension $V[G]$ of $V$ such that in $V[G]$, the nonstationary ideal $\NS$ is $\aleph_2$-saturated and $\Delta_1$-definable over $H(\omega_2)$ with parameters $\vec{C}$ and $T$.

\end{Theorem*}

Starting point for this work
was the following remarkable theorem of H. Woodin (\cite{W}) who showed that
from $\omega$-many Woodin cardinals one can get a model in which
$\NS$ is $\omega_1$-dense, which in particular implies
the following:
\begin{Theorem*}
 $Con(\ZFC + $ ``there are $\omega$-many Woodin cardinals''$)$ implies \\$Con(\ZFC$ $+$ ``$\NS$ is both
 $\aleph_2$-saturated and $\Delta_1$-definable with parameters in $H(\omega_2)$''$)$.
\end{Theorem*}

S. D. Friedman and Liuzhen Wu in their \cite{FL} asked whether the 
assumption of $\omega$-many Woodin cardinals can be replaced by
a milder large cardinal axiom. 
Partial progress was made in \cite{FH}, where it is proved that from the existence of $M_1$ one can construct
a model in which $\NS$ is $\Delta_1$-definable with parameter $K_{\omega_1}$
while for a previously fixed stationary, co-stationary $A \subset \omega_1$, the
restricted nonstationary ideal $\NSA$ is $\aleph_2$-saturated, however the methods used there
did not give an answer for the full nonstationary ideal.

This paper presents a new approach to the problem, using a new coding technique
and a different set up of the proof, in order to yield the desired result. Its main idea is to use robust coding forcings to first generically create a suitable ground model over which a second coding forcing is applied to yield the desired result. This is in contrast to the traditional approaches, whose codings usually take place over some core model, whose definition comes along with a certain degree of generic robustness, which enables the possibility of finding the information created with the coding forcings.

Put in greater context this work can be seen as a new 
instance of the general quest of set theory (see e.g. \cite{CF}, \cite{FF}, \cite{FL})
which aims for the construction of models with interesting
features, usually obtained by assuming the existence of a large 
cardinal in the ground model, and additionally allow some
robust description of some of its most important families of sets.
The investigation of such problems has a long history in
set theory. As mentioned already, the usual procedure for obtaining such results (as in \cite{CF} or \cite{FF}) is starting with a suitable inner model whose definition is absolute for generic extensions (e.g. G\"odel's $L$ or some core model below a (not too) large cardinal) and apply some iterated forcing constructions to encode the desired information. This method has its limitations in the presence of more complicated inner models due to the lack of a sufficient amount of generic absoluteness and condensation of the inner model. This work is an attempt to circumvent these difficulties.

The question of $\NS$ being $\Delta_1$-definable
came to prominence after the introduction of 
the canary tree by A.H. Mekler and S. Shelah. In \cite{MekShe}
they proved that consistently $\NS$ is $\Delta_1$-definable with parameters in $H(\omega_2)$\footnote{The proof of \cite{MekShe} has a flaw which was found and corrected by T. Hyttinen and M. Rautila, see \cite{HytRau}.}. 
On the other hand if $V=L$ and $\kappa > \omega_1$, $\NSK$ can not be $\Delta_1$-definable with parameters in $H(\kappa^+)$ (see \cite{Generalized Descriptive Set Theory} Theorem 49.3).
In \cite{FLZ} it is proved that, starting from $L$ as the ground model and with $\kappa$ a successor cardinal, there is a cardinal and $\GCH$-preserving forcing
notion $\forceP$ such that in $L^{\forceP}$, $\NSK$ is $\Delta_1$-definable over $H(\kappa^+)$ with $H(\kappa)$ as the parameter.
In the context of large cardinals it is proved in \cite{FL}
that given a measurable cardinal there is a model in which
$\NS$ is precipitous and $\Delta_1$-definable with parameters from $H(\omega_2)$.
It is also observed there that under the assumption of
the existence of $P(\omega_1)^{\#}$ and ``$\NS$ is saturated'',
$\NS$ can not be $\Delta_1$-definable with parameter $\omega_1$. The argument utilizes Woodin's $\forceP_{max}$-forcing.
In the light of the last result, the theorem of this paper is somewhat  optimal. 

There are more and very recent results which show that under strong assumptions, the nonstationary ideal $\NS$ can not be $\Delta_1(\omega_1)$-definable.
First the result of Friedman and Wu has been extended by P. L\"ucke, R. Schindler and P. Schlicht in \cite{LueckeSchindlerSchlicht}, where they prove under the assumption of a Woodin cardinal and a measurable cardinal above, that no subfamily of $P(\omega_1)$ which is $\Delta_1(\omega_1)$-definable over $H(\omega_2)$ can separate the club filter on $\omega_1$ from the nonstationary ideal.
Secondly, a very recent and yet unpublished result from P. Larson, R. Schindler and L. Wu shows that if $\BMM$ holds and a Woodin cardinal exists, then $\NS$ can not be $\Delta_1(X)$ definable for any parameter $X \subset \omega_1$. Their argument uses Woodin's stationary tower forcing.
Consequentially a very interesting picture starts to emerge connecting the (impossibility of) $\Delta_1$-definability of $\NS$ with forcing axioms, large cardinals and their consequences.

We end the introduction noting that all the results previously obtained for $\NS$ being $\Delta_1(\omega_1)$ definable hold only in the presence of $\CH$, which prompted a question in \cite{FLZ} whether there are models of "$\NS$ is $\Delta_1(\omega_1)$-definable" and the failure of $\CH$. As in the models we will construct $2^{\aleph_0}=\aleph_2$ holds, we can answer it in the positive.

\section{Peliminaries}
\subsection{Some results on Suslin trees}
Suslin trees are one of the three coding techniques we will use during
the proof. 
Recall that a set theoretic tree $(T, <)$ is a Suslin tree if it is a normal tree of height $\omega_1$
and no uncountable antichain. All the trees which appear in this paper will be normal, thus whenever we talk about trees it is implicitly assumed that these trees are normal.
It is central for our needs to have a criterion which guarantees that
a Suslin tree $S$ will remain Suslin after passing to a generic extension of the universe. Recall that for a forcing $\forceP$ and $M \prec H(\theta)$, a condition $q \in \forceP$ is $(M,\forceP)$-generic iff for every maximal antichain $A \subset \forceP$, $A \in M$, it is true that $ A \cap M$ is predense below $q$.
The key fact is the following (see \cite{Miyamoto2} for the case where $\forceP$ is proper)
\begin{Lemma}\label{preservation of Suslin trees}
 Let $T$ be a Suslin tree, $S \subset \omega_1$ stationary and $\forceP$ an $S$-proper
 poset. Let $\theta$ be a sufficiently large cardinal.
 Then the following are equivalent:
 \begin{enumerate}
  \item $\Vdash_{\forceP} T$ is Suslin
 
  \item if $M \prec H_{\theta}$ is countable, $\eta = M \cap \omega_1 \in S$, and $\forceP$ and $T$ are in $M$,
  further if $p \in \forceP \cap M$, then there is a condition $q<p$ such that 
  for every condition $t \in T_{\eta}$, 
  $(q,t)$ is $(M, \forceP \times T)$-generic.
 \end{enumerate}

\end{Lemma}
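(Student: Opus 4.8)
The plan is to prove both implications by reducing to two standard facts about proper forcing and Suslin trees. The first is the product genericity criterion: for countable $M \prec H_\theta$ with $\forceP, T \in M$, a condition $(q,t)$ is $(M, \forceP \times T)$-generic if and only if $q$ is $(M,\forceP)$-generic and $q \Vdash_\forceP \check t$ is $(M[\dot G], \check T)$-generic. The second is the observation that a Suslin tree manufactures $M$-generic branches at the critical level: if $T$ is Suslin and $\eta = M \cap \omega_1$, then every $t \in T_\eta$ is $(M,T)$-generic, since any maximal antichain $A \in M$ is countable, hence $A \subseteq M$, hence $A \subseteq T \upharpoonright \eta$, so the branch of predecessors of $t$ must already meet $A$. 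Throughout I use that $\forceP$, being $S$-proper, preserves $\omega_1$, and I reduce ``$T$ is Suslin'' to ``every maximal antichain of $T$ is bounded below $\omega_1$''.

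For $(1) \Rightarrow (2)$: fix $M$, $\eta = M\cap\omega_1 \in S$, and $p \in \forceP \cap M$. Since $\eta \in S$ and $\forceP$ is $S$-proper, choose $q \leq p$ that is $(M,\forceP)$-generic. Let $G$ be $\forceP$-generic with $q \in G$; then $M[G] \prec H_\theta^{V[G]}$ and, as $\omega_1$ is preserved, $M[G] \cap \omega_1 = \eta$. By hypothesis $T$ is Suslin in $V[G]$, and $T_\eta$ is computed the same in $V$ and $V[G]$, so by the second fact applied inside $V[G]$ every $t \in T_\eta$ is $(M[G], T)$-generic. As $G \ni q$ was arbitrary, $q \Vdash$ ``every $t \in T_\eta$ is $(M[\dot G],\check T)$-generic''. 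The product genericity criterion then yields that $(q,t)$ is $(M, \forceP\times T)$-generic for every $t \in T_\eta$, which is exactly $(2)$.

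For $(2) \Rightarrow (1)$: suppose not, and fix $p_0$ and a name $\dot A$ with $p_0 \Vdash$ ``$\dot A$ is an uncountable maximal antichain of $T$''. Consider
\[
 D = \{(p',t') \in \forceP \times T : \exists a \leq_T t' \ (p' \Vdash \check a \in \dot A)\}.
\]
Using maximality of $\dot A$, $D$ is dense: given $(p_0',t_0')$, extend $p_0'$ to decide an element $a \in \dot A$ comparable to $t_0'$; if $a \leq_T t_0'$ keep $t_0'$, otherwise replace $t_0'$ by $a$. Now pick $M \prec H_\theta$ with $\forceP, T, \dot A, p_0 \in M$ and $\eta = M\cap\omega_1 \in S$ (possible as $S$ is stationary), so that $D \in M$. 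Apply $(2)$ to $p_0$ to get $q \leq p_0$ with $(q,t)$ being $(M,\forceP\times T)$-generic for all $t \in T_\eta$. I claim $q \Vdash \dot A \subseteq T \upharpoonright \eta$, which makes $\dot A$ countable and contradicts $q \leq p_0$. If not, some $q' \leq q$ and $a$ with $\mathrm{lev}(a) \geq \eta$ satisfy $q' \Vdash \check a \in \dot A$; set $t = a \upharpoonright \eta \in T_\eta$, so $(q',a) \leq (q,t)$. By genericity $D \cap M$ is predense below $(q,t)$, so some $(p'',t'') \in D \cap M$ is compatible with $(q',a)$. Then $t'' \in M$ has level $< \eta$ and is comparable to $a$, forcing $t'' <_T a$; and $(p'',t'') \in D$ gives $a'' \leq_T t''$ with $p'' \Vdash \check{a''} \in \dot A$, whence $a'' <_T a$. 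A common extension of the compatible $p''$ and $q'$ then forces both $a''$ and $a$ into $\dot A$, contradicting that $\dot A$ is an antichain.

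The main obstacle is the careful transfer of the Suslin-genericity fact into the extension $V[G]$ in the first direction: one must verify $M[G] \prec H_\theta^{V[G]}$ and $M[G]\cap\omega_1 = \eta$ (which is where $(M,\forceP)$-genericity and $\omega_1$-preservation enter), confirm that $T_\eta$ is absolute between $V$ and $V[G]$, and then apply the product genericity criterion uniformly in $t \in T_\eta$. A secondary point requiring care is the reduction of ``Suslin'' to boundedness of maximal antichains, i.e.\ checking that the $S$-proper forcing adds no cofinal branch to $T$; for the coding applications of this paper the trees in play are normal, so this is routine, but it should be acknowledged rather than glossed.
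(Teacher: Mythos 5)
The paper never proves this lemma: it is quoted as a known fact with a pointer to Miyamoto's paper for the proper case, so there is no in-paper argument to compare against. Your proof is correct and is essentially the standard argument: the factorization of $(M,\forceP\times T)$-genericity into ``$q$ is $(M,\forceP)$-generic and $q$ forces $t$ to be $(M[\dot{G}],T)$-generic,'' combined with the observation that Suslinity makes every node of $T_\eta$ an $(M,T)$-generic condition, for $(1)\Rightarrow(2)$; and the dense set $D$ of pairs $(p',t')$ with a decided element of $\dot{A}$ below $t'$, for $(2)\Rightarrow(1)$. Two small remarks: the set $D$ is only dense below $(p_0,1_T)$, which is harmless but worth saying; and your closing worry about cofinal branches is moot here, since the paper defines a Suslin tree as a tree of height $\omega_1$ with no uncountable antichain, so boundedness of all maximal antichains (together with $\omega_1$-preservation, which $S$-properness gives) already yields the conclusion.
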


\begin{proof}
For the direction from left to right note first that $\Vdash_{\forceP} T$ is Suslin implies $\Vdash_{\forceP} T$ is ccc, and in particular it is true that for any countable elementary submodel $N[\dot{G}_{\forceP}] \prec H(\theta)^{V[\dot{G}_{\forceP}]}$,  $\Vdash_{\forceP} \forall t \in T (t$ is $(N[\dot{G}_{\forceP}],T)$-generic). Now if $M \prec H(\theta)$ and $M \cap \omega_1 = \eta \in S$ and $\forceP,T \in M$ and $p \in \forceP \cap M$ then there is a $q<p$ such $q$ is $(M,\forceP)$-generic. So $q \Vdash \forall t \in T (t$ is $(M[\dot{G}_{\forceP}], T)$-generic, and this in particular implies that $(q,t)$ is $(M, \forceP \times T)$-generic for all $t \in T_{\eta}$. 

For the direction from right to left assume that $\Vdash \dot{A} \subset T$ is a maximal antichain. Let $B=\{(x,s) \in \forceP \times T \, : \, x \Vdash_{\forceP} \check{s} \in \dot{A} \}$, then $B$ is a predense subset in $\forceP \times T$. Let $\theta$ be a sufficiently large regular cardinal and let $M \prec H(\theta)$ be countable such that $M \cap \omega_1=\eta \in S$ and $\forceP, B,p,T \in M$. By our assumption there is a $q <_{\forceP} p$  such that $\forall t \in T_{\eta} ((q,t)$ is $(M, \forceP \times T)$-generic). So $B \cap M$ is predense below $(q,t)$ for every $t \in T_{\eta}$, which yields that $q \Vdash_{\forceP} \forall t \in T_{\eta} \exists s<_{T} t(s \in \dot{A})$ and hence $q \Vdash \dot{A} \subset T \upharpoonright \eta$, so $\Vdash_{\forceP} T$ is Suslin.
\end{proof}

As for iterations, T. Miyamoto (see \cite{Miyamoto}) defined a generalization of the usual iterations with revised countable support which he called nice iterations
which share the useful properties of iterations with revised countable support and additionally satisfy that whenever
the factors of a nicely supported iteration do not kill Suslin trees then the nice limit will preserve
Suslin trees as well. As nice iterations are quite technical and complicated, we refer the interested reader to the next section of this paper where we introduce the concept more thoroughly and prove the properties of nice iterations we use and which are not already proved in \cite{Miyamoto}. The reader should be able to follow everything below as long as she is willing to accept the usage of four facts about nice iterations.
\begin{Fact}
Let $((\forceP_{\alpha}, \dot{\forceQ}_{\alpha}) \, : \, \alpha \le \lambda)$ be
a nice iteration of length $\lambda \in$ Lim. Then

\begin{enumerate}
\item If $\lambda$ is an inaccessible cardinal and for every $\alpha < \lambda$, $\forceP_{\alpha}$ has the $\lambda$-cc, then $\forceP_{\lambda}$ is the direct limit of the $\forceP_{\alpha}$'s.
\item If $\lambda$ is Mahlo and every factor of the iteration has size less than $\lambda$, then the nice iteration has the $\lambda$-c.c.
\item If $\alpha < \lambda$ and if $\forceP_{\alpha \lambda}$ denotes the tail iteration of $((\forceP_{\alpha}, \dot{\forceQ}_{\alpha}) \, : \, \alpha \le \lambda)$, then $\Vdash_{\alpha} "\forceP_{\alpha \delta}$ is a nice iteration".
\item Let $S$ be a Suslin tree. If for all $\alpha$, $\Vdash_{\alpha}$ `` $\dot{\forceQ}_{\alpha}$ is semiproper
 and $S$ is a Suslin tree.'', then $\Vdash_{\lambda}$ ``$S$ is a Suslin tree.'' Also the $\lambda$-length iteration $\forceP_{\lambda}$ will be a semiproper forcing as well.
\end{enumerate}

\end{Fact}

Recall that for two trees $(T_0, <_{T_0})$ and $(T_1, <_{T_1})$ their tree-product $T_0 \times T_1$ is defined to be the tree which consists of nodes $\{ (t_0,t_1) \, : \, t_0 \in T_0 \land t_1 \in T_1 \land$ height$(t_0) =$ height$(t_1) \}$, ordered by $(t_0,t_1) <_{T_0 \times T_1} (s_0, s_1)$ if and only if $t_0 <_{T_0} s_0$ and $t_1 <_{T_1} s_1$. From now on whenever we talk about a product of trees, it is always the tree product which is meant.
For our purposes it is necessary to iteratively add 
sequences of blocks of Suslin trees $(\bar{T}_{\alpha} \, : \, \alpha < \kappa)$
such that $\bar{T}$ is itself an $\omega$-length sequence of Suslin trees whose finite subproducts are Suslin again.
One can construct such sequences using Jech's forcing which adds a
Suslin tree with countable conditions.

\begin{Definition}
 Let $\forceP_J$ be the forcing whose conditions are
 countable, normal trees ordered by end-extension, i.e. $T_1 < T_2$ if and only
 if $\exists \alpha < \text{height}(T_1) \, T_2= \{ t \upharpoonright \alpha \, : \, t \in T_1 \}$
\end{Definition}
It is wellknown that $\forceP_J$ is $\sigma$-closed and
adds a Suslin tree. In fact more is true, the generically added tree $T$ has 
the additional property that for any Suslin tree $S$ in the ground model
$S \times T$ will be a Suslin tree in $V[G]$.
\begin{Lemma}
 Let $V$ be a universe and let $S \in V$ be a Suslin tree. If $\forceP_J$ is 
 Jech's forcing for adding a Suslin tree and if $T$ is the generic tree
 then $$V[T] \models T \times S \text{ is Suslin.}$$
\end{Lemma}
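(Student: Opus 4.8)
The plan is to show directly that in $V[T]$ the product $T \times S$ — identified as usual with the notion of forcing ordered coordinatewise — has no uncountable antichain, which is exactly what ``$T\times S$ is Suslin'' amounts to here; recall that two pairs $(t,s),(t',s')$ are incompatible iff $t,t'$ are $<_T$-incomparable or $s,s'$ are $<_S$-incomparable. Since $\forceP_J$ is $\sigma$-closed it preserves $\omega_1$ and adds no reals, so all levels of $S$ and of $T$ remain countable in $V[T]$. Assume towards a contradiction that some $p_0 \in \forceP_J$ forces $\dot A$ to be a maximal antichain in $T \times S$ of size $\aleph_1$.

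Fix a sufficiently large $\theta$ and a countable $M \prec H_\theta$ with $\forceP_J, S, \dot A, p_0 \in M$, and set $\delta = M \cap \omega_1$. Using the $\sigma$-closure of $\forceP_J$ together with the countability of $M$, I would build a descending sequence $\langle p_n : n < \omega\rangle$ of conditions in $\forceP_J \cap M$ below $p_0$, with heights cofinal in $\delta$, meeting every dense subset of $\forceP_J$ lying in $M$. The union $\bar p := \bigcup_n p_n$ is then a normal tree of height $\delta$ all of whose nodes lie in $M$, and since $S$ is Suslin in $V$ we have $S \!\upharpoonright\! \delta = S \cap M$. Exploiting the maximality of $\dot A$ and running through the countably many relevant dense sets, I would arrange the genericity so that the countable set $A_M := \{(t^*,s^*) \in \dot A \cap M : (t^*,s^*)\text{ is forced into }\dot A\text{ by some }p_n\}$ is predense in $\bar p \times (S\!\upharpoonright\!\delta)$; that is, every pair $(u,s)$ with $u \in \bar p$ and $s \in S\!\upharpoonright\!\delta$ is compatible with a member of $A_M$.

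The heart of the matter is to extend $\bar p$ to a genuine condition $q \le p_0$ of height $\delta + 1$ by choosing its top level wisely, using that $S$ is Suslin in $V$. Since $S$ is Suslin, the level $S_\delta$ is countable and each of its nodes is the limit of a cofinal branch through $S\!\upharpoonright\!\delta \subseteq M$; enumerate $S_\delta = \{s^k : k < \omega\}$. For each node $u$ of $\bar p$ I would construct, by an $\omega$-step interleaving that repeatedly invokes the predensity of $A_M$, a single cofinal branch $b_u \ge_T u$ through $\bar p$ such that for every $k$ there is $(t^*,s^*) \in A_M$ with $t^* <_T b_u$ and $s^* <_S s^k$: at stage $k$ one extends the branch built so far to catch, via predensity applied to the current top node together with a sufficiently high node on the branch of $s^k$, a member of $A_M$ lying below $(b_u, s^k)$. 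Taking the top level of $q$ to consist of the countably many branches $b_u$ — one above each node, to keep $q$ normal — produces the desired condition $q \le p_0$.

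Finally I would verify that $q \Vdash \dot A \subseteq M$, contradicting $|\dot A| = \aleph_1$. Indeed, suppose $(t,s)$ is forced below $q$ to lie in $\dot A$. If $t$ has level $<\delta$ it lies in $\bar p$; otherwise $t \ge_T b_u$ for one of the top nodes $b_u$. In the second case, if $\mathrm{lev}(s) < \delta$ then $s \in S\!\upharpoonright\!\delta$ and predensity of $A_M$ at $(b_u,s)$ yields a comparable member of $A_M$, while if $\mathrm{lev}(s) \ge \delta$ then $s \ge_S s^k$ for $s^k := s\!\upharpoonright\!\delta \in S_\delta$, and the construction of $b_u$ gives $(t^*,s^*) \in A_M$ with $t^* <_T b_u \le_T t$ and $s^* <_S s^k \le_S s$, so $(t,s)$ is compatible with $(t^*,s^*)$; the case $\mathrm{lev}(t)<\delta$ is handled likewise by predensity. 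In every case $(t,s)$ is compatible with some $(t^*,s^*) \in A_M \subseteq \dot A \cap M$, whence the antichain property forces $(t,s) = (t^*,s^*) \in M$. Thus $\dot A \subseteq M$ is countable, the desired contradiction. I expect the genuinely delicate step to be the simultaneous sealing in the interleaved construction of $b_u$: securing, above each node of $\bar p$, one branch that is good for all countably many nodes of $S_\delta$ at once. This is precisely where the Suslinness of $S$ is indispensable — it makes $S_\delta$ countable and presents each of its nodes as a branch through $M$ — and where the bookkeeping of the fusion must be arranged with care.
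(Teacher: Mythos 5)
Your overall strategy --- pass to a countable $M \prec H_\theta$, fuse an $M$-generic condition $\bar p$ of height $\delta = M\cap\omega_1$, seal it with a top level, and conclude $q \Vdash \dot A \subseteq M$ --- is a sound elementary-submodel reformulation of the paper's argument (the paper instead concludes that $\dot A$ is forced to be bounded, which amounts to the same thing). The gap is exactly at the step you yourself flag as delicate: the interleaved construction of the branches $b_u$ does not deliver what you need. At stage $k$ you apply predensity of $A_M$ to the pair $(v,s')$ with $s' <_S s^k$; this yields $(t^*,s^*)\in A_M$ with $s^*$ \emph{comparable with} $s'$, but when $s^* >_S s'$ nothing forces $s^*$ to lie below $s^k$ --- it may branch off above $s'$ and be incomparable with $s^k$. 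Choosing $s'$ higher on the branch of $s^k$ and retrying can fail again at every attempt, so the fusion never ``catches'' a member of $A_M$ below $(b_u,s^k)$, and the final verification collapses in the case $\mathrm{lev}(t)\ge\delta$, $\mathrm{lev}(s)\ge\delta$ (the mixed case $\mathrm{lev}(s)<\delta$ has the same defect, since predensity at a node $v$ on the branch may return a $t^*>_T v$ that is off the branch $b_u$). Note also that your stated use of Suslinness --- countability of $S_\delta$ and the fact that its nodes are branches through $S\cap M$ --- is never actually exploited by the construction as written; predensity of $A_M$ in $\bar p\times(S{\upharpoonright}\delta)$ alone cannot suffice, because a cofinal branch need only meet predense sets that contain a maximal antichain lying in $M$, and the predense sets you produce depend on the externally built sequence $(p_n)$.

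The missing idea, which is the engine of the paper's proof, is to push maximality of certain \emph{antichains of $S$} into $M$. For a condition $p$ and a node $v\in p$, the set $A^1_{v,p}:=\{x\in S \,:\, \exists y\le_p v\ (p\Vdash (x,y)\in\dot A)\}$ is an antichain of $S$; since $S$ is ccc, the obvious extension process (if $A^1_{v,p}$ is not maximal, take $x$ incompatible with all of it, use maximality of $\dot A$ to catch a new pair, and enlarge $v$ and $p$) must terminate at a countable stage, for otherwise it would produce an uncountable antichain of $S$. Hence the set $D_v$ of conditions $p$ admitting some $v'\ge_p v$ with $A^1_{v',p}$ a \emph{maximal} antichain of $S$ is dense, and $D_v\in M$ for $v\in M$. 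By $M$-genericity of $(p_n)$, every $v\in\bar p$ has such a $v'\in\bar p$ with $A^1_{v',p_n}\in M$ maximal; let $b_v$ be any cofinal branch through $v'$. Then a single node $v'$ on the branch handles all of $S_\delta$ at once: each $s^k\in S_\delta$ determines an $M$-generic branch through $S\cap M$, hence lies above some $x\in A^1_{v',p_n}$, giving $(x,y)\in\dot A\cap M$ with $y\le v' <_T b_v$ and $x <_S s^k$. This is precisely the role of the antichains $A^1_c$ and the Suslinness of $S$ in the paper's proof; without this (or an equivalent) ingredient your argument does not close.
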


\begin{proof}
Let $\dot{T}$ be the $\forceP_J$-name for the generic Suslin tree. We claim that $\forceP_J \ast \dot{T}$ has a dense subset which is $\sigma$-closed. As $\sigma$-closed forcings will always preserve ground model Suslin trees, this is sufficient. To see why the claim is true consider the following set:
$$\{ (p, \check{q}) \, : \, p \in \forceP_J \land height(p)= \alpha+1  \land  \check{q} \text{ is a node of $p$ of level } \alpha \}.$$
It is easy to check that this set is dense and $\sigma$-closed in $\forceP_J \ast \dot{T}$.

\end{proof}

A similar observation shows that a we can add an $\omega$-sequence of
such Suslin trees with a fully supported iteration. Even longer sequences
of such trees are possible if we lengthen the iteration but for our needs $\omega$-blocks are sufficient.

\begin{Lemma}\label{ManySuslinTrees}
 Let $S$ be a Suslin tree in $V$ and let $\forceP$ be a fully supported
 iteration of length $\omega$ of forcings $\forceP_J$. Then in the generic extension
 $V[G]$ there is an $\omega$-sequence of Suslin trees $\vec{T}=(T_n \, : \, n \in \omega)$ such
that for any finite $e \subset \omega$
the tree $S \times \prod_{i \in e} T_i$ will be a Suslin tree in $V[\vec{T}]$.
\end{Lemma}
\begin{proof}
Let $G$ be a generic filter for $\forceP$. First we observe that the fully supported product $\prod_{n \in \omega} \forceP_J$ followed by the fully supported forcing with the product of the generically added trees $T_n$ has a $\sigma$-closed dense subset which is defined coordinate-wise as above, thus ground model Suslin trees are preserved. If we pick a finite $e \subset \omega$, then for an arbitrary Suslin tree $S \in V$, $\prod_{i \in e} T_i \times S$ is a Suslin tree in the intermediate model generated by the trees $T_i$, $i \in e$ over $V$. This is preserved when passing to the generic extension $V[G]$.
\end{proof}

These sequences of Suslin trees will become important later in our proof, thus they will get a name.
\begin{Definition}
 Let $\vec{T} = (T_{\alpha} \, : \, \alpha < \kappa)$ be a sequence of Suslin trees. We say that the sequence is an 
 independent family of Suslin trees if for every finite set $e= \{e_0, e_1,...,e_n\} \subset \kappa$ the product $T_{e_0} \times T_{e_1} \times \cdot \cdot \cdot \times T_{e_n}$ 
 is a Suslin tree again.
\end{Definition}

\subsection{Nice Iterations}
This section contains a quick reminder of the main definitions and properties of T. Miyamoto's concept of nice iterations, which we use in our proof. We will provide proofs for the three properties of nice iterations which we took advantage of in the arguments before. Our notation will be parallel to \cite{Miyamoto}. In particular we will use his definition of an iteration (see \cite{Miyamoto}, Definition 1.6). Recall that for conditions $p$ in a forcing iteration, $l(p)$ denotes the length of $p$ as seen as a sequence, and $p \upharpoonright \alpha$ denotes the condition which is the sequence $p$ cut at $\alpha$.

The central concept to define what a nicely supported iteration is, is a nested antichain. 
\begin{Definition}
Suppose that $(\forceP_{\alpha} \, : \, \alpha < \nu)$ be an iteration of length $\nu$. A nested antichain in $(\forceP_{\alpha} \, : \, \alpha < \nu)$ is a triple $(T, (T_n \, : \, n \in \omega), (suc^n_{T} \, : \, n \in \omega ))$ such that
\begin{enumerate}
\item $T= \bigcup_{n \in \omega} \{ T_n \, : \, n \in \omega \}.$
\item $T_0 = \{a_0\}$ for some condition $a_0 \in \bigcup \{ \forceP_{\alpha} \,: \, \alpha < \nu \}$.
\item $T_n \subset \bigcup_{n \in \omega} \{ \forceP_{\alpha} \, : \, \alpha < \nu \}$ and $suc^n_{T} : T_n \rightarrow P(T_{n+1})$.
\item For $a \in T_n$ and $b \in suc^n_{T}(a)$, $l(a) \le l(b)$ and $b \upharpoonright l(a) \le a$.
\item For $a \in T_n$ and $b,b' \in suc^n_{T}(a)$, $b \ne b'$ iff $b \upharpoonright l(a)$ and $b' \upharpoonright l(a)$ are incompatible in $\forceP_{l(a)}$.
\item For $a \in T_n$, $\{b \upharpoonright l(a) \, : \, b \in suc^n_{T}(a) \}$ is a maximal antichain below $a$ in $\forceP_{l(a)}$. 
\item $T_{n+1} = \bigcup \{ suc^n_{T} (a) \, : \,  a \in T_n \}$.
\end{enumerate}

\end{Definition}
From now on we will simply write $T$ for a nested antichain, and suppress the mentioning of the levels $T_n$ and the successor function $suc^n_{T}$.
A nice limit $\forceP_{\nu}$ of an iteration $(\forceP_{\alpha} \, : \, \alpha < \nu\}$ will consist of conditions which correspond to nested antichains in $(\forceP_{\alpha} \, : \, \alpha < \nu )$. To achieve that we need to write a nested antichain again as a sequence.

\begin{Definition}
Let $T$ be a nested antichain in an iteration $(\forceP_{\alpha} \, : \, \alpha < \nu)$. Let $\beta < \nu$ be arbitrary, then we say that $y \in \forceP_{
\beta}$ is a mixture of $T$ up to $\beta$ iff for all $i < \beta$, $y \upharpoonright i$ forces:
\begin{enumerate}
\item $y(i) = a_0(i)$, if $i < l(a_0)$ and $a_0 \upharpoonright i \in G_i$, where $T_0 = \{a_0\}$.
\item $y(i)= b(i)$, if there is a pair $(a,b)$ such that $a,b \in T$, $b \in suc(a)$, $l(a) \le i < l(b)$ and $b \upharpoonright i \in G_i$.
\item $y(i) = 1$, if there is a sequence $(a_n \, : \, n \in \omega)$ such that $a_0 \in T_0$ and  for all $n \in \omega$, $a_{n+1} \in suc^n_{T} (a_n)$, $l(a_n) \le i$ and $a_n \in G_i \upharpoonright l(a_n)$.
\item No requirements else.
\end{enumerate}

\end{Definition}

It is important to note that for a given nested antichain $T$ in $(\forceP_{\alpha} \, : \, \alpha < \nu)$, a mixture of $T$ up to some $\beta$ need not to exist. One can weaken the requirements in the following way.
\begin{Definition}
Let $(\forceP_{\alpha} \, : \, \alpha < \nu )$ be an iteration, assume that $T$ is a nested antichain in $(\forceP_{\alpha} \, : \, \alpha < \nu)$, and let $\beta \le \nu$ be a limit ordinal. Then a sequence $y$ of length $\beta$ is $(T, \beta)$-nice iff for all $\alpha < \beta$, $y \upharpoonright \alpha \in \forceP_{\alpha}$ and $y \upharpoonright \alpha$ is a mixture of $T$ up to $\alpha$. 

\end{Definition}
Note that in the definition above, we do not demand that $y$ is a condition in $\forceP_{\beta}$.
The intuition is that our iteration at limits should consist of sequences which correspond to nested antichains. This reasoning seems circular at first sight, the nested antichains need an iteration already to even define them. But a careful definition takes care of that. We define first conditions at limit stages using nested antichains of the old sequence of posets, and then show that in the new limit, the new nested antichains for that new poset are already represented as conditions automatically.

With these notions it is possible to define what a nice limit of an iteration is. 
\begin{Definition}
Assume that $(\forceP_{\alpha} \, : \, \alpha < \nu)$ is an iteration, $\nu$ a limit ordinal. We define a separative preorder, the nice limit $(\forceP_{\nu}, 1_{\nu}, <)$ of $(\forceP_{\alpha}\, : \, \alpha < \nu)$ as follows:
\begin{enumerate}
\item $\forceP_{\nu}:= \{ x \, : \, x$ is a sequence of length $\nu$ and there is a nested antichain $T$ in $(\forceP_{\alpha} \, : \, \alpha < \nu)$ such that $x$ is $(T,\nu)$-nice $\}$.
\item If $x,y \in \forceP_{\nu}$ then $x \le_{\nu} y$ if and only if for all $\alpha < \nu$, $x \upharpoonright \alpha \le_{\alpha} y \upharpoonright \alpha$.
\item Maximal element $1_{\nu}$ is defined to be the $\nu$-sequence $\bigcup_{\alpha < \nu} 1_{\alpha}$.
\end{enumerate}
\end{Definition}

One can show the following fundamental properties:
\begin{thm}(2.9. pp.1445 \cite{Miyamoto})
Let $(\forceP_{\alpha} \, : \, \alpha < \nu )$ be an iteration and $\forceP_{\nu}$ its nice limit. Then $(\forceP_{\alpha} \, : \, \alpha \le \nu)$ is an iteration and for every nested antichain $T$ in $(\forceP_{\alpha} \, : \, \alpha \le \nu )$, and any $\nu$-length sequence $x$ which is $(T, \nu)$-nice we have that $x \in \forceP_{\nu}$, thus $x$ is a mixture of $T$ up to $\nu$.
\end{thm}
A nice iteration is defined as follows (see \cite{Miyamoto}, Definition 3.6)
\begin{Definition}
An iteration $(\forceP_{\alpha} \, : \, \alpha < \nu )$ is nice iff
\begin{enumerate}
\item For any $i+1<\nu$, if $p \in \forceP_i$ and $\tau$ is a $\forceP_i$-name such that $p \Vdash_i "\tau \in \forceP_{i+1} \land \tau \upharpoonright i \in \dot{G}_i"$, then there is a $q \in \forceP_{i+1}$ such that $q \upharpoonright i =p$ and $p \Vdash_i " \tau (i) \equiv q (i)"$
\item For any limit ordinal $\beta < \nu$ and any sequence $x$ of length $\beta$, $x \in \forceP_{\beta}$ holds iff there is a nested antichain $T$ in $(\forceP_{\alpha} \, : \, \alpha < \nu )$ such that $x$ is $(T, \beta)$-nice.
\end{enumerate}
\end{Definition}
One can produce nice iterations of length $\delta$ in the usual way: we form a sequence $(\forceP_{\alpha}, \dot{\forceQ}_{\alpha} \, : \, \alpha < \delta)$, where for every $\alpha < \delta$, $\dot{\forceQ}_{\alpha}$ is a $\forceP_{\alpha}$-name for a separative partial order in $V^{\forceP_{\alpha}}$, and $\forceP_{\alpha+1}$ is isomorphic to $\forceP_{\alpha} \ast \dot{\forceQ}_{\alpha+1}$, and $\forceP_{\beta}$ is the nice limit of the $\forceP_{\alpha}$'s for $\alpha<\beta$ and limit ordinals $\beta$.

As usual, it is possible to cut an iteration $(\forceP_{\alpha} \, : \, \alpha < \delta)$ at some intermediate stage, and look at the tail iteration $\forceP_{\alpha \delta}$ as seen from the new ground model $V^{\forceP_{\alpha}}$. If $G_{\alpha}$ denotes a $\forceP_{\alpha}$-generic filter, then $\forceP_{\alpha \delta} = \{ p \upharpoonright [\alpha, \delta) \, : \, p \in \forceP_{\delta} \land p \upharpoonright \alpha \in G_{\alpha}\}$. As conditions in nice iterations correspond to nested antichains and nested antichains can be cut in a canonical way (see Lemma 2.7 in \cite{Miyamoto}), we obtain that tail iterations of nice iterations are nice iterations as seen from the intermediate stage where the cut happend.
\begin{thm}
For a nice iteration $(\forceP_{\alpha} \, : \, \alpha < \delta)$, if $\alpha < \delta$ then $\Vdash_{\alpha} "\forceP_{\alpha \delta}$ is a nice iteration$"$
\end{thm}
\begin{proof}
We shall show that for an arbitrary $\alpha< \delta$ and any limit $\beta \le \delta$, $\alpha < \beta$, $\Vdash_{\alpha} x \in \forceP_{\alpha \beta} \Leftrightarrow \exists$ nested antichain $\dot{T}$ in $(\forceP_{\alpha \gamma} \, : \, \gamma < \beta)$ such that $x$ is $(\dot{T}, \alpha \beta)$-nice. 

For the forward direction, we note that if $x \in \forceP_{\alpha \beta}$ then there is a $p \in \forceP_{\beta}$ such that $p \upharpoonright \alpha \Vdash p\upharpoonright [\alpha, \beta)=x$. As $\forceP_{\beta}$ is a nice iteration, there is a nested antichain $T$ such that $p$ is $(T, \beta)$-nice. As a result $p \upharpoonright \alpha$ is a mixture of $T$ up to $\alpha$. Then there exists a $\forceP_{\alpha}$-name $\dot{T}$ such that 
\[ p \upharpoonright \alpha \Vdash_{\alpha} \dot{T} \text{ is a nested antichain in } (\forceP_{\alpha \gamma} \, : \, \gamma < \beta )\] and
\[p \upharpoonright \alpha \Vdash_{\alpha} p \upharpoonright [\alpha,\beta) (=x) \text{ is } (\dot{T},\alpha \beta)\text{-nice}\]
which is what we wanted.

For the backward direction, let $p \in \forceP_{\alpha}$ be such that 
\begin{align*}
p \Vdash_{\alpha} x \text{ is a sequence of length } (\beta -\alpha) \land \exists \dot{T} (\dot{T}\text{ is a nested antichain in } \\ (\forceP_{\alpha \gamma})_{\gamma < \beta} \text{ such that } x \text{ is $(\dot{T},\alpha \beta)$-nice).}
\end{align*}
We shall show that $p \Vdash_{\alpha} x \in \forceP_{\alpha \beta}$.
It is straightforward to turn the $\forceP_{\alpha}$-name $\dot{T}$ for a nested antichain in $(\forceP_{\alpha, \gamma})_{\gamma < \beta}$ into a nested antichain $T$ in $(\forceP_{\gamma})_{\gamma < \beta}$ such that $a_0$ of $T$ is $p$. As $\forceP_{\beta}$ is a nice iteration, there is a condition $p' \in \forceP_{\beta}$ such that $p'$ is a $T$-mixture up to $\beta$. As $a_0$ of $T$ is $p $, we know that $p' \upharpoonright \alpha = p$. Thus we arrive at a condition $p' \in \forceP_{\beta}$ such that $p' \upharpoonright \alpha \Vdash p' \upharpoonright [\alpha, \beta)$ is a $\dot{T}$-mixture up to $\beta$, and by assumption $p' \upharpoonright \alpha \Vdash  x$ is $(\dot{T}, \alpha \beta)$-nice, and as $\dot{T}$ uniquely determines the condition due to \cite{Miyamoto}, Cor. 2.6, we infer that $p' \upharpoonright \alpha \Vdash x= p' \upharpoonright [\alpha, \beta)$, hence $p  \Vdash x \in \forceP_{\alpha \beta}$ as desired.

\end{proof}

We shall show that if $\kappa$ is a Mahlo cardinal and $(\forceP_{\alpha} \, : \, \alpha < \kappa)$ is a nicely supported iteration of length $\kappa$ and every $\forceP_{\alpha}$ has the $\kappa$-c.c. then the limit $\forceP_{\kappa}$ has the $\kappa$-c.c. as well.
To see this, note first that for an inaccessible $\delta$ and a nice iteration $(\forceP_{\alpha} \, : \, \alpha < \delta)$ such that every $\forceP_{\alpha}$ has the $\delta$-c.c. the nested antichains, and hence all the conditions in the nice iteration are bounded below $\delta$ in terms of their support. It follows that the nice limit of $(\forceP_{\alpha} \, : \, \alpha < \delta)$ has size at most $\delta$, and thus the $\delta^{+}$-chain condition and is the same as just taking the direct limit at such stages. Thus if $\kappa$ is a Mahlo cardinal, then the set of inaccessibles below $\kappa$ is stationary, thus we have a stationary set of stages for which we take the direct limit of our iteration and by a well-known theorem (see e.g. Theorem 16.30 in \cite{Jech}) the nice limit will satisfy the $\kappa$-c.c.
As a summary
\begin{Lemma}
If $\kappa$ is a Mahlo cardinal and $(\forceP_{\alpha} \, : \, \alpha < \kappa)$ is a nice iteration of factors which have size less than $\kappa$ then the nice limit will satisfy the $\kappa$-c.c.

\end{Lemma}

\subsection{Coding reals by triples of ordinals}
We present a coding method invented by A. Caicedo and B. Velickovic (see \cite{CV})  which we will use in the argument.

\begin{Definition}
A $\vec{C}$-sequence, or a ladder system, is a sequence $(C_{\alpha} \, : \, \alpha \in \omega_1, \alpha \text{ a limit ordinal })$, such
that for every $\alpha$, $C_{\alpha} \subset \alpha$ is cofinal and the ordertype of $C_{\alpha}$ is $\omega$.
\end{Definition}

For three subsets $x,y,z \subset \omega$ we can define an oscillation function. First turn the set $x$ 
into an equivalence relation $\sim_x$, defined on the set $\omega- x$ as follows: for natural numbers in the 
complement of $x$ satisfying $n \le m$ let $n \sim_x m$ if and only if $[n,m] \cap x = \emptyset$.
This enables us to define:
\begin{Definition}
For a triple of subset of natural numbers $(x,y,z)$ list the intervals $(I_n \, :\, n \in k \le \omega)$ of equivalence classes of
$\sim_x$ which have nonempty intersection with both $y$ and $z$. Then the oscillation map $o(x,y,z):
k \rightarrow 2$ is defined to be the function satisfying

\begin{equation*}
o(x,y,z)(n) = \begin{cases}
0  & \text{ if min}(I_n \cap y) \le \text{min}(I_n \cap z) \\ 1 & \text{ else}
\end{cases}
\end{equation*}

\end{Definition}

Next we want to define how suitable countable subsets of ordinals can be used to code reals. First we fix a ladder system $\vec{C}$ for the rest of this section.
Suppose that $\omega_1 < \beta < \gamma < \delta$ are fixed limit ordinals of uncountable cofinality, and that 
$N \subset M$ are countable subsets of $\delta$.
Assume further that $\{ \omega_1, \beta, \gamma\} \subset N$ and that for every 
$\eta \in \{ \omega_1, \beta, \gamma\}$, $M \cap \eta$ is a limit ordinal and $N \cap \eta < M \cap \eta$.
We can use $(N,M)$ to code a finite binary string. Namely let $\bar{M}$ denote the transitive collapse of 
$M$, let $\pi : M \rightarrow \bar{M}$ be the collapsing map and let 
$\alpha_M := \pi(\omega_1)$, $\beta_M := \pi(\beta), \, \gamma_M := \pi(\gamma) \, \delta_M:= \bar{M}$. 
These are all countable limit ordinals.
Further set $\alpha_N:= sup(\pi``(\omega_1 \cap N))$ and let the height $n(N,M)$ of $\alpha_N$ in $\alpha_M$ 
be the natural number defined by

$$n(N,M):= card (\alpha_N \cap C_{\alpha_M})$$ where $C_{\alpha_M}$ is an element of our previously fixed ladder system. 
As $n(N,M)$ will appear quite often in the following we write shortly $n$ for $n(N,M)$. Note that
as the ordertype of each $C_{\alpha}$ is $\omega$, and as $N \cap \omega_1$ is bounded below $M \cap \omega_1$,
$n(N,M)$ is indeed a natural number.
Now we can assign to the pair $(N,M)$ a triple $(x,y,z)$ of finite subsets of natural numbers as follows:
$$x:= \{ card(\pi(\xi) \cap C_{\beta_M}) \, : \, \xi \in \beta \cap N \}.$$ Note that $x$ again is finite as $\beta \cap N$ is 
bounded in the cofinal in $\beta_M$-set $C_{\beta_M}$, which has ordertype $\omega$. Similarly we define 
$$y:= \{ card(\pi(\xi) \cap C_{\gamma_M}) \, : \, \xi \in \gamma \cap N \}$$ and
$$z:= \{ card(\pi(\xi) \cap C_{\delta_M} \, : \, \xi \in \delta \cap N \}.$$ Again it is easily 
seen that these sets are finite subsets of the natural numbers.
We can look at the oscillation $o(x \backslash n, y \backslash n, z \backslash n)$ (remember we let $n:= n(N,M)$)
and if the oscillation function at these points has a domain bigger or equal to $n$ then we write
\begin{equation*}
s_{\beta, \gamma, \delta} (N,M):= \begin{cases}
o(x \backslash n, y \backslash n, z \backslash n)\upharpoonright n & \text{ if defined } \\  \ast \text{ else}
\end{cases}
\end{equation*}
Where $\ast$ should simply be some error symbol. Similarly we let $s_{\beta, \gamma, \delta} (N,M) \upharpoonright l = \ast$ when $l > n$.
Finally we are able to define what it means for a triple of ordinals $(\beta, \gamma, \delta)$ to code a real $r$.

\begin{Definition}
For a triple of limit ordinals $\omega_1 < \beta < \gamma < \delta$ of uncountable cofinality, we say that it codes a real $r \in 2^{\omega}$
if there is a continuous increasing sequence $(N_{\xi} \, : \, \xi < \omega_1)$ of countable sets of ordinals whose 
union is $\delta$ and which satisfies that there is a club $C \subset \omega_1$
such that whenever $\xi \in C$ is a limit ordinal then there is a $\nu < \xi$ such that
$$ r = \bigcup_{\nu < \eta < \xi} s_{\beta, \gamma, \delta} (N_{\eta}, N_{\xi}). $$
We say that the sequence $(N_{\xi} \, : \, \xi < \omega_1)$ is a reflecting sequence.
\end{Definition}

Witnesses to the coding can be added with a proper forcing. 
On the other hand there is a certain amount of control
for fixed triples of ordinals and the behavior of 
continous, increasing sequences on them:

\begin{thm}[Caicedo-Velickovic]

\begin{enumerate}

\item[$(\dagger)$] Given ordinals $\omega_1 < \beta < \gamma < \delta < \omega_2$ of cofinality $\omega_1$,
there exists a proper notion of forcing $\forceP_{\beta \gamma \delta}$ such that after forcing with it the following holds:
There is an increasing continuous sequence $(N_{\xi} \, : \, \xi < \omega_1)$ such that $N_{\xi} \in [\delta]^{\omega}$ 
whose union is $\delta$ such that for every limit $\xi < \omega_1$ and every $n \in \omega$ there 
is $\nu < \xi$ and $s_{\xi}^n \in 2^n$ such that 
$$s_{\beta \gamma \delta}(N_{\eta}, N_{\xi}) \upharpoonright n = s_{\xi}^{n}$$ holds for every $\eta$ in the interval $(\nu, \xi)$.
We say then that the triple $(\beta, \gamma, \delta)$ is stabilized.
 
\item[$(\ddagger)$] Further if we fix a real $r$
there is a proper notion of forcing $\forceP_r$ such that the forcing will produce for a triple of ordinals 
$(\beta_r, \gamma_r, \delta_r)$ of size and cofinality $\omega_1$ a reflecting sequence 
$(P_{\xi} \, : \, \xi < \omega_1)$, $P_{\xi} \in [\delta_r]^{\omega}$ such that $\bigcup P_{\xi} = \delta_r$ 
and such that there is a club $C \subset \omega_1$ and for every limit $\xi \in C$ there is a $\nu < \xi$ such that 
$$\bigcup_{\nu < \eta < \xi} s_{\beta_r \gamma_r \delta_r} (P_{\eta}, P_{\xi}) = r.$$ 
\end{enumerate}
\end{thm}

Both partial orders $\forceP_{\beta \gamma \delta}$ and $\forceP_{r}$ which force $(\dagger)$ and $(\ddagger)$ respectively are actually instances of a general class of notions of forcing which were investigated first by J. Moore's in his work on the Set Mapping Reflection Principle ($\MRP$) (see \cite{Moore}). We shall see later that these forcings never kill Suslin trees. For that reason we have to introduce a couple of notions from \cite{Moore}.
We need first the following local version of stationarity: 
\begin{Definition}
Let $\theta$ be a regular cardinal, $X$ be an uncountable set, let $M \prec H_{\theta}$ be a countable 
elementary submodel which contains $[X]^{\omega}$ as an element. Then $S \subset [X]^{\omega}$ is $M$-stationary if 
for every club subset $C$ of $[X]^{\omega}$, $C \in M$ it holds that $$C \cap S \cap M \ne \emptyset.$$
\end{Definition}

\begin{Definition}
Let $X$ be an uncountable set, $N \in [X]^{\omega}$ and $x \subset N$ finite. Then the Ellentuck 
topology on the set $[X]^{\omega}$ is generated by base sets of the form
$$ [x,N]:= \{ Y \in [X]^{\omega} \, : \, x \subset Y \subset N \}.$$ From now on whenever we say 
open we mean open with respect to the Ellentuck topology.
\end{Definition}

\begin{Definition}
Let $X$ be an uncountable set, let $\theta$ be a large enough regular cardinal so that
 $[X]^{\omega} \in H_{\theta}$. Then a function $\Sigma$ is said to be open stationary 
if and only if its domain is a club $C \subset [H_{\theta}]^{\omega}$ and for every 
countable $M \in C$, $\Sigma(M) \subset [X]^{\omega}$ is open and $M$-stationary.
\end{Definition}

Moore has shown that for any open stationary map $\Sigma$ it is possible to force a  reflecting sequence $(N_{\xi} \, : \, \xi < \omega_1)$ with a proper forcing $\forceP_{\Sigma}$ (see \cite{Moore}, Theorem 3.1.).

\begin{Proposition}[Moore]
Let $\Sigma$ be an open stationary function defined on some club $C \subset [H_{\theta}]^{\omega}$ with range $P([X]^{\omega})$ for some uncountable set $X$. Then there is a proper notion of forcing $\forceP_{\Sigma}$ which adds a continuous sequence of models $(N_{\xi} \, : \, \xi < \omega_1)$ (a reflecting sequence) in $dom(\Sigma)$ 
such that for every limit ordinal $\xi$ there is a $\nu < \xi$ such that for every $\eta$ with 
$\nu < \eta < \xi$, $N_{\eta} \cap X \in \Sigma(N_{\xi})$.
\end{Proposition}
The forcing $\forceP_{\Sigma}$ is defined as expected: for an open stationary map $\Sigma$ let $\forceP_{\Sigma}$ consist of conditions $p$ which are  functions $p: \alpha +1 \rightarrow dom(\Sigma)$, $\alpha$ countable, 
which are continuous and $\in$-increasing, and which additionally satisfy the 
$\MRP$-condition on its limit points, namely that for every $0< \nu < \alpha$, $\nu$ a limit ordinal,  
there is a $\nu_0 < \nu$ such that $p(\xi) \cap X \in \Sigma(p(\nu))$ for every 
$\xi$ in the interval $(\nu_0, \nu)$. The order is by extension.

Now, as already mentioned above, both forcings $\forceP_{\beta \gamma \delta}$ and $\forceP_{r}$ which will produce $(\dagger)$ and $(\ddagger)$ respectively are of the form $\forceP_{\Sigma}$ (see \cite{CV}, Lemma 1, 4 and 5).

\begin{Proposition}\label{MRPpreservesSuslin}
There are two open stationary maps $\Sigma_r$ and $\Sigma_{\beta \gamma \delta}$ such that $\forceP_{\beta \gamma \delta} = \forceP_{\Sigma_{\beta \gamma \delta}}$ and $\forceP_r = \forceP_{\Sigma_r}$.
\end{Proposition}
As a consequence, if we show that $\forceP_{\Sigma}$ always preserves Suslin trees for $\Sigma$ an arbitrary open stationary map, we will have proven that $\forceP_r$ and $\forceP_{\beta \gamma \delta}$ preserve Suslin trees. This is indeed the case as we will show now.

\begin{Proposition}\label{PFAMRP}
Let $\Sigma$ be an arbitrary open stationary map and let $\forceP_{\Sigma}$ be as defined above. Then $\forceP_{\Sigma}$ preserves Suslin trees.
\end{Proposition}
\begin{proof}
 We fix an arbitrary Suslin tree $T$. For every countable $M \prec H_{\lambda}$, if $\eta = M \cap \omega_1$ and $t \in T_{\eta}$ then by the Susliness of $T$, $t$ is $(M,T)$-generic. As a consequence we can consider the generic extension $M[t]$ and note that $M[t]$ will not add any new countable sets of ordinals to $M$. 
 
Let $\Sigma$ be the open stationary map. Recall that $\forceP_{\Sigma}$ was defined to consist of conditions $p : \alpha + 1 \rightarrow dom(\Sigma)$, $\alpha < \omega_1$, which are $\in$-increasing and continuous and which additionally satisfy that for every limit ordinal $\nu$, $0< \nu < \alpha$ 
there is a $\nu_0 < \nu$ such that $p(\xi) \cap X \in \Sigma(p(\nu))$ for every 
$\xi$ in the interval $(\nu_0, \nu)$. The order is by extension.

We let $\lambda$ be sufficiently large and pick a countable $M \prec H_{\lambda}$ which contains $T, \Sigma$, $\forceP_{\Sigma}$, a 
condition $p \in \forceP_{\Sigma}$, and the structure $H_{|\forceP_{\Sigma}|^+}$. 
Letting $M \cap \omega_1 =\eta$, then our goal is to produce a stronger condition $q< p$ such that for every $t \in T_{\eta}$, $(q,t)$ is an $(M, \forceP_{\Sigma} \times T)$-generic condition.

We list $(t_n \, : \, n \in \omega)$, the elements of $T_{\eta}$ and build the according generic extensions $M[t_n]$. As $M$ and $M[t_n]$ have the same countable sequences of $M$-elements, there is no difference when talking about $\Sigma$ and $\forceP_{\Sigma}$ in either $M$ or $M[t_n]$. 
We list all the dense subsets $(D_0, D_1,..)$ of $\forceP_{\Sigma}$
which we can find in $\bigcup_{n \in \omega} M[t_n]$ and build by recursion a descending sequence of conditions 
$(p_i \, : \, i \in \omega)$ in $M$, starting at $p_0:= p$ hitting the corresponding $D_{i-1}$. 

Assume that we have already built conditions up to $i \in \omega$. 
We consider the collection of models $N'_i$, where each $N'_i$ is a countable elementary submodel of $H_{|\forceP_{\Sigma}|^+}$ 
containing $H_{\theta}$, $D_i$, $\forceP_{\Sigma}$ and $p_i$, and 
build the club of countable structures $C_i:= \{ N'_i \cap X \, : \, N'_i \text{ as just described} \}$. 
Note that this club will be in $M$, hence $M \cap H_{\theta} \in C_i$.
Thus the set $M\cap H_{\theta}$ will be in
the domain of $\Sigma$ and by the definition of 
$\Sigma$, the set $\Sigma(M \cap H_{\theta})$ is $M\cap H_{\theta}$-stationary and open. 
So there is an $N_i=N'_i \cap X  \in C_i \cap \Sigma(M \cap H_{\theta}) \cap M$, and by the definition of the Ellentuck topology, 
there is a finite subset of $N_i$ called $x_i$ such that $[x_i, N_i] \subset \Sigma(M \cap H_{\theta})$.
We first extend the condition $p_i$ to $q_i := p_i \cup \{( \zeta_i +1, hull^{H_{\theta}} (p_i(\zeta_i) \cup x_i))\}$, 
for $\zeta_i$ the maximum of the domain of $p_i$. This condition $q_i$ will also be in $N'_i$ as all its defining 
parameters are, thus as $N'_i$ also contains $D_i$ we can extend the condition $q_i$ to a $p_{i+1} \in N'_i \cap D_i$. 
Note that as we are working in $N'_i$, no matter how we extend $q_i$, the range of the extended condition intersected with $X$ will always
be contained in $N_i=N'_i \cap X$, and as $\Sigma(M \cap H_{\theta}) \supset [x_i, N_i]$, 
it will also be contained in $\Sigma(M \cap H_{\theta})$. 
Then if we set $q= p_{\omega} := \bigcup_{i \in \omega} p_{i} \cup (\omega, (M\cap H_{\theta}))$ 
then this will be a condition in $\forceP_{\Sigma}$ as it forms an $\in$-increasing, continuous function from $\omega+1$ to $dom(\Sigma)$, for which $\forall n \in \omega (q(n) \cap X \in \Sigma(q(\omega))=M\cap H_{\theta})$ is true. By construction $q$ is 
below $p$ and $(M,\forceP_{\Sigma} \times T)$-generic, thus the forcing preserves the arbitrary Suslin tree $T$.
 
\end{proof}

\subsection{Almost disjoint coding}

The following subsection quickly reintroduces the almost disjoint coding forcing due to R. Jensen and R. Solovay \cite{JensenSolovay}. We will identify subsets of $\omega$ with their characteristic function and will use the word reals for elements of $2^{\omega}$ and subsets of $\omega$ respectively.
Let $F=\{f_{\alpha} \, : \, \alpha < 2^{\aleph_0} \}$ be a family of almost disjoint subsets of $\omega$, i.e. a family such that if $r, s \in F$ then 
$r \cap s$ is finite. Let $X\subset  \kappa$ for $\kappa \le 2^{\aleph_0}$ be a set of ordinals. Then there 
is a ccc forcing, the almost disjoint coding $\mathbb{A}_F(X)$ which adds 
a new real $x$ which codes $X$ relative to the family $F$ in the following way
$$\alpha \in X \text{ if and only if } x \cap f_{\alpha} \text{ is finite.}$$
\begin{Definition}
 The almost disjoint coding $\mathbb{A}_F(X)$ relative to an almost disjoint family $F$ consists of
 conditions $(r, R) \in \omega^{<\omega} \times F^{<\omega}$ and
 $(s,S) < (r,R)$ holds if and only if
 \begin{enumerate}
  \item $r \subset s$ and $R \subset S$.
  \item If $\alpha \in X$ then $r \cap f_{\alpha} = s \cap f_{\alpha}$.
 \end{enumerate}

\end{Definition}
There is another variant due to L. Harrington (see \cite{Harrington}) which codes sets of reals relative to
a new real.
For the following fix some definable bijection of finite sequences of integers
and $\omega$ and for $b \in \omega^{\omega}$ write $\bar{b} (n)$ for
the natural number which codes the finite sequence $b \cap n$.
A subset $b \subset \omega$ gives rise to a new set $S(b) \subset \omega$ if we
consider the set of the codes of its initial segments $\{ \bar{b}(n) \, : \, n \in \omega \}$.

\begin{Definition}
Suppose that $A \subset [\omega]^{\omega}$, then the almost disjoint coding forcing
 for $A$, $\mathbb{A}(A)$ is defined as follows. Conditions
 are pairs $(s(0), s(1))$ such that $s(0)$ is a finite set of natural numbers
 and $s(1)$ is a finite subset of the fixed set of reals $A$.
 For two conditions $r,s \in \mathbb{A}(A)$ we say
 $s<r$ if and only if
 \begin{itemize}
  \item $r(0) \subset s(0)$ and $r(1) \subset s(1)$
  \item $\forall a \in r(1)\, (S(a) \cap s(0) \subset r(0))$
 \end{itemize}

\end{Definition}
Note that $\mathbb{A}(A)$ has the Knaster property, thus products of $\mathbb{A}(A)$ have the ccc. Given a set of reals $A$ in the ground model $V$, the effect of forcing with $\mathbb{A}(A)$ is the following. The first coordinate of conditions in the generic filter $G$ will union up to a real $a$ which codes the set $A$ with the help of a predicate for $\omega^{\omega} \cap V$. In $V[G]$ membership in $A$ is characterized like this
$$x \in A \leftrightarrow x \in V \land S(x) \cap a \text{ is finite. }$$

This characterization will play an important role later. We will work towards a universe whose $H(\omega_2)$ will be definable in arbitrary generic extensions obtained using ccc forcings. Consequentially, we can use the just defined forcing to encode information into just one real, and the information can be correctly decoded in all further ccc extensions of the universe.

\subsection{$\NS$ saturated}

As our proof depends on Shelah's argument to force $\NS$ saturated from a Woodin cardinal we introduce very briefly
some of the main ideas. We later (see Theorem \ref{NS saturated}), assuming knowledege of \cite{SchindlerNS}, discuss in more detail how the proof can be altered. 

The crucial forcing notion which can be used to bound the length 
of antichains in $P(\omega_1) \slash \NS$ is the sealing forcing.

\begin{Definition}
 Let $\vec{S}=(S_i \, : \, i < \kappa)$ be a maximal antichain in
 $P(\omega_1) \slash \NS$. Then the sealing forcing for $\vec{S}$, $\seal$
 is defined as follows. Conditions
 are pairs $(p,c)$ such that $p: \alpha +1 \rightarrow \vec{S}$ and 
 $c: \alpha+1 \rightarrow \omega_1$, where the image of $c$ should be closed, and
 $\alpha < \omega_1$. We additionally demand that 
 $\forall \xi < \omega_1 \, c(\xi) \in \bigcup_{i \in \xi} p(i)$, and conditions are ordered by
 end-extension.
\end{Definition}
Thus, given a maximal antichain $\vec{S}$, $\seal$ will collapse its length down to $\omega_1$ while simultaneously shoot a club through the diagonal union of $\vec{S}$. The latter has the desired effect that $\vec{S}$ remains a maximal antichain in all stationary set preserving outer models, which is wrong if we would just collapse the length of $\vec{S}$ down to $\omega_1$. It is wellknown that $\seal$ is $\omega$-distributive and 
stationary sets preserving if and only if $\vec{S}$ is maximal.

\begin{thm}(Shelah)
 Let $V$ be a universe with a Woodin cardinal $\Lambda$. Then there is a $\Lambda$-sized 
 forcing notion $\forceP$, such that in $V^{\forceP}$, $\NS$ is saturated
 and $\omega_2 = \Lambda$.
\end{thm}
More details of the proof will be discussed later after Theorem \ref{NS saturated}

\section{Towards a proof of the theorem}
In this section we shall prove the main theorem of this work. We aim first to prove it starting from an arbitrary ground model $V$ with a Woodin cardinal. Later we will show how to reduce the parameters used in the $\Delta_1$-definition when forcing over the canonical inner model with a Woodin cardinal, $M_1$.
\begin{thm}
 Let $V$ be an arbitrary universe with a Woodin cardinal $\delta$. Let $\vec{C}$ be a ladder system on $\omega_1$ and let $\vec{T}^0$ be an $\omega$-length sequence of independent Suslin trees. Then there is  $\delta$-sized, semi-proper partial order $\forceP$ such that in the generic extension $V^{\forceP}$ of $V$ the nonstationary ideal $\NS$ is saturated and $\Delta_1$-definable over $H(\omega_2)$ with parameters $\vec{C}$ and $\vec{T}^0$.
\end{thm}
Note here that Lemma \ref{ManySuslinTrees} shows that the assumptions of the last theorem can always be met starting from a universe with a Woodin cardinal. 
The theorem has as a corollary the following result which seems to be interesting in its own right, which is why we state it explicitly.
\begin{Corollary}
The statement "$\NS$ is (boldface) $\Delta_1$-definable over $H(\omega_2)$ and saturated" is consistent with any large cardinal assumption as long as we assume the existence of a Woodin cardinal.
\end{Corollary}

We shall sketch, omitting a lot of technical issues, a simplified idea of the proof of the theorem first: We start with an arbitrary universe $V$ with a Woodin cardinal $\delta$ and an $\omega$-sequence of independent Suslin trees $\vec{T}^0$ and fix a ladder system $\vec{C}$. 
The idea is, instead of working over some generically absolute core model as is usually done in such coding arguments, to first create a suitable ground model called $W_0$ via forcing over $V$, and in a second iteration do the actual coding over $W_0$.

The universe $W_0$ will be obtained via first forcing a $\diamondsuit$-sequence $(a_{\kappa} \, : \, \kappa < \delta)$ which will witness that $\delta$ is Woodin with $\diamondsuit$. One can use $\delta$-Cohen forcing for this, which will preserve $\delta$ being Woodin and the independence of $\vec{T}^0$. Then we use a $\delta$-long, nicely supported iteration which ensures that in $W_0$ the nonstationary ideal $\NS$ is saturated, $\delta=\aleph_2$, $H(\omega_2)^{W_0}$ is a boldface $\Sigma_1$-definable object for all outer universes $W'$ of $W_0$ obtained via ccc forcings and there is a definable sequence of length $\omega_2$ of independent Suslin trees. 

In a second iteration over $W_0$ we will use the definable list of independent Suslin trees to code up all the characteristic functions of stationary subsets of $\aleph_1$ with the help of branching or specializing elements of the list. The fact that $H(\omega_2)^{W_0}$ is $\Sigma_1$-definable in all ccc extensions of $W_0$ can be utilized to define (with a $\Sigma_1$-definition) a class of suitable, $\aleph_1$-sized models of fragments of $\ZFC$ which are sufficient to define the list of Suslin trees properly and consequentially enable a boldface $\Sigma_1$ definition of stationarity. Indeed the assertion "$S$ is stationary" will be equivalent to the statement that there is a suitable model which defines some $\omega_1$ block of independent Suslin trees and this block has a pattern of branches and specializing functions which correspond to the characteristic function of $S$.

\subsection{The first iteration}
We start with $V$ as our ground model, let $\delta$ be its Woodin cardinal
and in a first step we force  the existence of a $\diamondsuit$-sequence
$(a_{\alpha} \, : \, \alpha < \delta)$ on ${V}_{\delta}$ which witnesses that $\delta$ is Woodin with $\diamondsuit$ in the following sense:

\begin{Definition}

Let $\delta$ be a Woodin cardinal then we say that $\delta$ is Woodin 
with $\diamondsuit$ iff there is a sequence $(a_{\kappa} \, :\, \kappa < \delta)$ 
such that for each $\kappa$, $a_{\kappa} \subset V_{\kappa}$ and for every 
$A \subset V_{\delta}$ the set $$\{ \kappa < \delta \,:\,
 A \cap V_{\kappa}= a_{\kappa} \land \kappa \text{ is $A$-strong up to } 
\delta \}$$ is stationary in $\delta$.
 
\end{Definition}

The $\diamondsuit$-sequence serves as our guideline for a nicely
supported iteration of length $\delta$ and can always be forced over $V$ with a $\delta$-Cohen forcing (see \cite{SchindlerNS}, Lemma 0.3).
We fix an independent $\omega$-sequence of Suslin trees $\vec{T}^0$ (w.l.o.g it exists is $V$), and an arbitrary ladder system $\vec{C}$ on $\omega_1$. We use $\vec{C}$ to define an almost disjoint family of reals $F$ of size $\aleph_1$ from it in a well-known way: working in $L[\vec{C}]$, for every $\alpha < \omega_1$ we let $r_{\alpha}$ denote the $<_{L[\vec{C}]}$-least real which codes $\alpha < \omega_1$. We consider the set $\{r_{\alpha} \cap n \, : \, n \in \omega\}$ and code every $r_{\alpha} \cap n$ as a natural number $m_{\alpha,n}$. Then, setting $s_{\alpha} = \{ m_{\alpha,n} \, : \, n \in \omega \}$ we obtain an almost disjoint family of reals of size $\aleph_1$. Note that $\{F\}$ is $\Delta_1(\vec{C})$-definable.
We emphasize that $\vec{C}$, $F$ and $\vec{T}^0$ are fixed from now on for the rest of the proof.

Starting from $V$ we begin to define an iteration of length $\delta$ of semiproper forcings which is nicely supported.
As a consequence the iteration preserves 
semiproperness (see \cite{Miyamoto}, Lemma 4.2) and the Susliness of Suslin trees in the limit steps (see \cite{Miyamoto}, Lemma 5.0).
We construct the factors by recursion:
suppose we are at stage $\alpha$ of our iteration, thus the forcing $\forceP_{\alpha}$ 
is already defined and we want to 
define the forcing $\dot{\forceQ}_{\alpha}$ from which we will
get $\forceP_{\alpha+1} = \forceP_{\alpha} \ast \dot{\forceQ}_{\alpha}$ as usual.
We define $\dot{\forceQ}_{\alpha}$ by cases
\begin{itemize}
 \item if $\alpha$ is a limit ordinal such that the $\alpha$-th entry $a_{\alpha}$ of the 
 $\diamondsuit$-sequence is the $\forceP_{\alpha}$-name of a maximal antichain $\vec{S}$
 in $P(\omega_1)\slash \NS$ of length $\ge \aleph_2$ then
 we let $\dot{\forceQ}_{\alpha}$ be the sealing forcing $\mathbb{S}(\vec{S})$, but only if 
 the forcing $\mathbb{S}(\vec{S})$ is semiproper. Otherwise force with the usual L\'evy collapse $Col(\omega_1, 2^{\aleph_2})$ (so Shelah's original proof can be adopted to the situation).
 
 \item if $\alpha \in Lim$ and $a_{\alpha}$ is the $\forceP_{\alpha}$-name of an $R \subset \omega_1$, we
 let $\dot{\forceQ}_{\alpha}$ be the almost disjoint real coding which 
 codes $R$ into a real $r_R$ relative to the 
 family $F$  of almost disjoint reals. Then we force with $Col(\omega_1, 2^{\aleph_2})$.
 \item if $\alpha \in Lim$ and $\alpha$ is such that $a_{\alpha}$ is the $\forceP_{\alpha}$-name of a real
       then force the existence of a sequence $(N_{\xi} \, : \, \xi < \omega_1)$ such that
       for a triple $(\beta, \gamma, \delta) < \omega_2$, $r$ is coded by the triple in the 
       sense of $(\ddagger)$. Then force with $Col(\omega_1, 2^{\aleph_2})$.
 \item if $\alpha$ is a limit ordinal and $a_{\alpha}$ is the $\forceP_{\alpha}$-name of a triple of limit ordinals $(\beta, \gamma, \delta) < \omega_2$
       of cofinality $\omega_1$ then force to stabilize the triple in the sense of $(\dagger)$. Then force with $Col(\omega_1, 2^{\aleph_2})$.
       
 \item if $\alpha$ is a successor ordinal then we force with Jech's forcing to obtain a Suslin tree $T_{\alpha}$. Again this is followed by forcing with $Col(\omega_1, 2^{\aleph_2})$.
 
 \item else force with the usual collapse $Col(\omega_1, 2^{\aleph_2})$.
\end{itemize}
We will list a couple of easy properties of the iteration $\forceP_{\delta} := ((\forceP_{\alpha}, \dot{\forceQ}_{\alpha}) \, : \, \alpha < \delta)$.
As nice iterations of semiproper forcings are semiproper and as every factor of the iteration is at least semiproper, this
results in a semiproper, hence stationary set preserving notion of forcing. 
Note further that the iteration has length the Woodin cardinal $\delta$, thus we will take stationarily often below $\delta$ direct limits and consequentially $\forceP_{\delta}$ has the $\delta$-c.c. Let $G$ be
the generic filter for the iteration, so that we arrive at the model $W_0:=V[G]$.
Note that every real $r \in W_0$ will have a $\forceP_{\eta}$-name for a $\eta < \delta$ and this name will be hit stationarily often by the $\diamondsuit$-sequence. Once we code a real into a triple $(\alpha, \beta, \gamma)$ in the sense of $(\ddagger)$, the triple will code $r$ in all later models of our iteration. Likewise once a triple of ordinals is stabilized, it will remain stabilized for the rest of the iteration. To summarize the things just said:

\begin{Lemma}
 $W_0= V[G]$ satisfies:
 \begin{enumerate}
\item $\aleph_2=\delta$.  
  \item Every $X \subset \omega_1$ is coded by a real with the help of our fixed almost disjoint family of reals $F$.
  \item Every triple of limit ordinals $(\alpha, \beta, \gamma) < \omega_2$ of uncountable
  cofinality is stabilized in the sense of $(\dagger)$.
  \item Every real is coded by a triple of limit ordinals $(\alpha, \beta, \gamma)< \omega_2$.
 \end{enumerate}

\end{Lemma}
  Consequentially, in $W_0$ there is a definable wellorder of $P(\omega_1)$ using the 
  fixed ladder system $\vec{C}$ and the fixed almost disjoint family $F$ as parameters. 
  
  \begin{Definition}\label{Definition Wellorder}
  Let $X, Y \in P(\omega_1)^{W_0}$ then let
  $X \unlhd Y$ if the antilexicographically least triple of ordinals $(\alpha_0, \beta_0, \gamma_0)$
  which code a real $r_0$ which codes $X$ with the help of the a.d. family $F$ is antilexicographically less or equal than 
  the antilexicographically least triple of ordinals $(\alpha_1, \beta_1, \gamma_1)$ which codes a real $r_1$ which in turn
  codes $Y$.

  \end{Definition}
Note that if $M$ is an arbitrary countable transitive model containing $\vec{C}$ such that it satisfies the items $2,3$ and $4$ from the previous Lemma, then $M$ can define a wellorder $\unlhd_M$ on its $P(\omega_1)^M$ in exactly the same way.

If we look closer we see that none of the forcings used in the iteration destroy Suslin trees,
consequentially the whole iteration preserves Suslin trees.
This is shown now in a series of Lemmas.

\begin{Lemma}\label{a.d.coding preserves Suslin trees}
 Let $T$ be a Suslin tree and let $\mathbb{A}_F(X)$ be the almost disjoint coding which codes
 a subset $X$ of $\omega_1$ into a real with the help of an almost disjoint family
 of reals of size $\aleph_1$. Then $$\Vdash_{\mathbb{A}_{F}(X)} T \text{ is Suslin }$$
 holds.
\end{Lemma}
\begin{proof}
 This is clear as $\mathbb{A}_{F}(X)$ has the Knaster property, thus the product $\mathbb{A}_{F}(X) \times T$ is ccc and $T$ must be Suslin in $V^{\mathbb{A}_{F}(X)}$. 
\end{proof}

\begin{Lemma}
 Let $\vec{S}=(S_i)_{i < \kappa}$ be a maximal antichain of stationary subsets of $\omega_1$. Let $\mathbb{S}(\vec{S})$ be
 the sealing forcing which seals off the maximal antichain. Let $T$ be a Suslin tree. Then
 $$ \Vdash_{\mathbb{S}(\vec{S})} T \text{ is Suslin}$$
 holds.
 
\end{Lemma}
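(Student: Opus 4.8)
The plan is to verify clause (2) of the Suslin-tree preservation criterion (the equivalence $(1)\Leftrightarrow(2)$ stated in the first Lemma of Section 1.2), since this is exactly the tool the section was built to apply. First I record the two structural facts I will use: $\seal$ is $\omega$-distributive, and because $\vec{S}$ is a maximal antichain the set $\bigcup_{i<\kappa} S_i$ is co-nonstationary, so for club-many countable $M\prec H_\theta$ the ordinal $\eta:=M\cap\omega_1$ lies in $S_{i_0}$ for some index $i_0\in M$ with $i_0<\eta$. This is the stationary set $S$ for which I claim $\seal$ is $S$-proper, and it is precisely the hypothesis that will let me close off a generic condition at the top. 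So fix such an $M$ with $\seal, T\in M$, a condition $p=(p_0,c_0)\in\seal\cap M$, and I must produce $q<p$ in $\seal$ such that for every node $t\in T_\eta$ the pair $(q,t)$ is $(M,\seal\times T)$-generic.

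The condition $q$ is built by a fusion over the countable tree $T\restriction\eta$. Enumerate, with infinite repetitions, all dense open $D_n\subseteq\seal\times T$ with $D_n\in M$, and recursively construct a descending sequence $p=q_0\geq q_1\geq\cdots$ in $\seal\cap M$ together with an increasing sequence of levels $\beta_0<\beta_1<\cdots$ cofinal in $\eta$ (possible since $\eta$ has cofinality $\omega$ and $M$ contains cofinally many ordinals below $\eta$). At stage $n$, for each node $s\in T_{\beta_n}$ the set $E_s:=\{\,q'\leq q_n : \exists s^\ast\geq_T s\ (q',s^\ast)\in D_n\,\}$ is dense open below $q_n$ and lies in $M$; here the key use of $\omega$-distributivity is that $T_{\beta_n}$ is countable, so $\bigcap_{s\in T_{\beta_n}}E_s$ is still dense, and I may pick $q_{n+1}\in\bigcap_s E_s\cap M$. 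For each $s$ this yields a witness $s^\ast\geq_T s$ with $(q_{n+1},s^\ast)\in D_n$; the ordinal $\beta_{n+1}:=\sup_s\mathrm{level}(s^\ast)$ is a countable sup definable from parameters in $M$, hence $\beta_{n+1}<\eta$. Having reached $\bigcup_n q_n$, whose domain has supremum $\eta$, I extend it to $q$ by appending $q(\eta)=i_0$ and $c(\eta)=\eta$; this is legitimate exactly because $\eta\in S_{i_0}$ with $i_0<\eta$, so the defining demand $c(\eta)\in\bigcup_{i<\eta}p(i)$ is met and the image of $c$ stays closed. Thus $q\in\seal$ and $q\leq p$.

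Finally I verify genericity. Fix $t\in T_\eta$ and a dense open $D\in M$, say $D=D_n$ for infinitely many $n$; given any $(q',t')\leq(q,t)$ I must find a compatible element of $D\cap M$. For the restrictions $s=t\restriction\beta_n$ the construction supplies witnesses $(q_{n+1},s^\ast)\in D\cap M$ with $q'\leq q\leq q_{n+1}$, so the $\seal$-coordinates are automatically compatible, and the task reduces to arranging that some witness lies on the branch $t$. This branch-amalgamation is the main obstacle: a single witness per node of $T_{\beta_n}$ need not sit below $t$, so I strengthen the recursion so that the witnesses chosen at cofinally many stages form, below $q_{n+1}$, a predense set of nodes that every branch through $T\restriction\eta$ must meet — again amalgamating over the countable level $T_{\beta_n}$ by $\omega$-distributivity, exactly as in Miyamoto's preservation arguments. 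Once this is secured, $D\cap M$ is predense below $(q,t)$ for every $t\in T_\eta$, clause (2) holds, and the criterion gives $\Vdash_{\seal} T\text{ is Suslin}$, as required.
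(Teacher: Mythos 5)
There is a genuine gap at the step you yourself flag as ``the main obstacle.'' Your fusion hits, for each node $s\in T_{\beta_n}$, \emph{one} witness $(q_{n+1},s^\ast)\in D_n$ with $s^\ast\geq_T s$, and then you need some witness to lie on the particular branch determined by $t\in T_\eta$. Saying you will ``strengthen the recursion so that the witnesses form a predense set of nodes that every branch must meet'' is not an argument: for that you must use that $T$ is Suslin, and nowhere in your construction does Suslinness enter --- your fusion would go through verbatim for any $\omega_1$-tree with countable levels, for which the conclusion is false. The missing ingredient is exactly the fact that, because $T$ is Suslin, every $t\in T_\eta$ is an $(M,T)$-generic condition (every maximal antichain of $T$ lying in $M$ is sealed below $\eta=M\cap\omega_1$). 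The paper's proof is organized around this: it writes $\seal\times T$ as the two-step iteration $T\ast\seal$, notes that $T$ is $\omega$-distributive so that $\seal\cap M[t]=\seal\cap M$ and the $\seal$-dense sets of the countably many models $M[t_i]$ ($t_i\in T_\eta$) can be arranged in an $\omega\times\omega$ matrix inside $M$, and then builds a \emph{single} descending $\omega$-sequence in $\seal\cap M$ meeting all of them; genericity of $(p_\omega,t)$ then follows from genericity of $t$ over $M$ plus genericity of $p_\omega$ over each $M[t_i]$, with no branch amalgamation needed. If you want to keep your direct fusion over $T\restriction\eta$, you would in effect be reproving this reduction, and you would still have to invoke Suslinness to seal the antichain of witnesses below level $\eta$.

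A second, repairable error: it is \emph{not} true that club-many countable $M\prec H_\theta$ satisfy $M\cap\omega_1\in S_{i_0}$ for some $i_0\in M$; maximality only gives that $\bigcup_{i<\kappa}S_i$ contains a club, and the index witnessing $\eta\in\bigcup_i S_i$ may well lie outside $M$ (this is precisely why $\seal$ can fail to be semiproper, and why the iteration in the paper only seals antichains when $\seal$ happens to be semiproper). The criterion you are applying only needs a single fixed stationary $S$ with $\seal$ being $S$-proper, and the paper takes $S=S_0$, the first element of $\vec{S}$, which is definable from $\vec{S}\in M$ and hence always available inside $M$. Note also that closing off at the top requires $\eta\in\bigcup_{i<\eta}q(i)$, so $S_0$ must be placed into the range of $q$ \emph{below} $\eta$ during the construction; appending $q(\eta)=S_{i_0}$ at the top contributes nothing to that union.
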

\begin{proof}
 Recall first how the sealing forcing was defined. Conditions $(p,c) \in \seal$ are pairs of functions defined on successor ordinals below $\omega_1$ such that $p: \alpha+1 \rightarrow \vec{S}$ and $c: \alpha+1 \rightarrow \bigcup_{\xi < \alpha} S_{p(\xi)}$, such that $c$ is continuous and $c(\xi) \in \bigcup_{i < \xi} S_{p(i)}$ holds. We note first that without loss of generality we can demand for every condition $(p,c) \in \seal$ that $p(0)=S_0$. This has the consequence that $\seal$ is $S_0$-proper. Indeed, assume $M\prec H(\theta)$ is such that it contains $\seal$ and $(p,c) \in \seal$ and $M \cap \omega_1 \in S_0$. We shall find a $(q,d) < (p,c)$ which is $(M, \seal)$-generic. Let $(D_n)_{n \in \omega}$ be a list of the dense subsets of $\seal$ in $M$ and let $(p,c):=(p_0,c_0)>(p_1,c_1)>...$ be a descending sequence of conditions in $M \cap \seal$ which lie in the corresponding dense subset $D_i$. We can always ensure that $sup_{n \in \omega} range(c_n) = M \cap \omega_1$ which is in $S_0$. But now $(\bigcup_{n \in \omega} c_n) \cup (sup(dom(c_n)), M\cap \omega_1)$ will be a closed subset of the diagonal union of the family $\{S_{p_n (\xi_n)} \, : \, n \in \omega\}$ for $\xi_n := max (dom \, p_n)$ and it can be used in a straightforward way to produce a condition of $\seal$ which is a lower bound for the sequence $(p_n,c_n)_{n \in \omega}$, hence an $(M, \seal)$-generic condition below $p$. So $\seal$ is $S_0$-proper. 
 
Because of Lemma \ref{preservation of Suslin trees}, it is enough to show that for any regular and sufficiently large
 $\theta$, every $M \prec H_{\theta}$ with $M \cap \omega_1 = \eta \in S_0$, and every
 $p \in \mathbb{S}(\vec{S}) \cap M$ there is a $q<p$ such that for every
 $t \in T_{\eta}$, $(q,t)$ is $(M,\mathbb{S}(\vec{S}) \times T)$-generic.
 Note first that as $T$ is Suslin, every node $t \in T_{\eta}$ is an
 $(M,T)$-generic condition. Further, as forcing with a Suslin tree
 is $\omega$-distributive, $M[t]$ has the same $M[t]$-countable sets as $M$.
 By the argument of the first paragraph of the proof we know that if $M\prec H(\theta)$ is such
 that $M \cap \omega_1 \in S_0$ then an $\omega$-length descending sequence
 of $\seal$-conditions in $M$ whose domains converge to $M \cap \omega_1$
 has a lower bound as $M \cap \omega_1 \in S_0$.
 
 We construct an $\omega$-sequence of elements of $\seal$ which has a lower bound
 which will be the desired condition. 
 We list the nodes on $T_{\eta}$, $(t_i \, : \, i \in \omega)$ and
 consider the according generic extensions $M[t_i]$.
 In every $M[t_i]$ we list the $\seal$-dense subsets of $M[t_i]$,
 $(D^{t_i}_n \, : \, n \in \omega)$ and write 
 the so listed dense subsets of $M[t_i]$ as an $\omega \times \omega$-matrix and enumerate
 this matrix in an $\omega$-length sequence of dense sets $(D_i \, : \, i \in \omega)$.
 If $p=p_0 \in \seal \cap M$ is arbitrary we can find, using the fact that $\forall i \, (\seal \cap M[t_i] = M \cap \seal$), an $\omega$-length, descending
 sequence of conditions below $p_0$ in $\seal \cap M$, $(p_i \, : \, i \in \omega)$
 such that $p_{i+1} \in M \cap \seal$ is in $D_i$.
 We can also demand that the domain of the conditions $p_i$ converge to $M \cap \omega_1$.
 Then the $(p_i)$'s have a lower bound $p_{\omega} \in \seal$ and $(t, p_{\omega})$ is an
 $(M, T \times \seal)$-generic conditions for every $t \in T_{\eta}$ as any $t \in T_{\eta}$ is $(M,T)$-generic
 and every such $t$ forces that $p_{\omega}$ is $(M[T], \seal)$-generic; moreover $p_{\omega} < p$ as 
 desired.

\end{proof}

We mentioned already in Proposition \ref{MRPpreservesSuslin} that both forcings $\forceP_{\beta \gamma \delta}$ and $\forceP_{r}$ which guarantee the Caicedo-Velickovic property $(\dagger)$ and $(\ddagger)$ are forcings of the form $\forceP_{\Sigma}$ for an open stationary set $\Sigma$.  We proved already in Proposition \ref{PFAMRP} that such forcings preserve Suslin trees. 

\begin{Fact}
 Let $\forceP_{\beta \gamma \delta}$ be the forcing which stabilizes the triple $(\beta, \gamma, \delta)$ via adding a reflecting sequence $(N_i)_{i < \omega_1}$.
 Let $T$ be a Suslin tree. Then 
 $$\Vdash_{\forceP_{\beta \gamma \delta}} T \text{ is Suslin }$$ 
 does hold. The same is true for $\forceP_{r}$, the forcing which codes the real $r$ in some triple of ordinals $(\beta_r, \gamma_r, \delta_r)$ below $\omega_2$.
\end{Fact}

Since the L\'evy Collapse $Coll(2^{\aleph_2},\aleph_1)$ and the forcing $\forceP_J$ to add Suslin trees are both $\sigma$-closed, all forcing which appear in our iteration preserve Suslin trees.
\begin{Lemma}
 The iteration as defined above preserves Suslin trees: every tree $S$ which is 
 Suslin at some stage $V^{\forceP_{\alpha}}$ will remain Suslin in $W_0=V[G]$, where $G$ denotes
 the generic for the $\delta$-length iteration.
\end{Lemma}

Having established the preservation of Suslin trees we note the following: during our iteration
we cofinally often added Suslin trees $T_{\alpha}$ with the forcing $\forceP_{J}$ (where $\alpha$ denotes
the stage of the iteration where we adjoined $T_{\alpha}$). We know already that for an arbitrary finite list of trees $T_{\alpha_0}, T_{\alpha_1},...,T_{\alpha_n}$ adjoined this way, their product $T_{\alpha_0} \times T_{\alpha_1} \times...\times T_{\alpha_n}$ is a Suslin tree again, and will remain to be one in the final model $W_0$ we produce.
The definable wellorder $\unlhd$ of $P(\omega_1)$ unlocks a definition for a
canonical sequence of length $\omega_2$ of independent Suslin trees. The first entry of 
that sequence is, for technical reasons which will 
become clear later defined differently.
We start with our fixed independent $\omega$-sequence $\vec{T}^0$ and let $\vec{T}^{\alpha}$ be the $\unlhd$-least $\omega_1$ sequence of Suslin trees such that $\bigcup_{\beta < \alpha} \vec{T}^{\beta}$ concatenated with $\vec{T}^{\alpha}$ remains an independent sequence of Suslin trees.

As the wellorder $\unlhd$ in fact talks about the reals which are almost disjoint codes for the corresponding elements of $P(\omega_1)$ it will be useful to give that sequence of reals a name as well. For every element $X$ in $P(\omega_1)$, the set of reals which are almost disjoint codes for $X$ is infinite. In the following we nevertheless talk about \emph{the} real $r_X$ which codes $X \in P(\omega_1)$ by which we mean the $\unlhd$-least such real coding $X$.

\begin{Definition}
In $W_0$, 
let $(r_i \, : \, i < \omega_2)$ be the sequence of reals defined recursively
 as follows:
 
 \begin{itemize}
  \item $r_0$ is the real which codes a subset of $\omega_1$, which codes the independent $\omega$-sequence of Suslin trees $\vec{T}^0$.
 
  \item $r_{\alpha}$, for $\alpha > 0$ is the least real which is an almost disjoint code for the $\unlhd$-least subset of $\omega_1$ which itself is a code for an $\omega_1$-sequence of independent Suslin trees $\vec{T}^{\alpha}$,
  such that the concatenated sequence of the union of the Suslin trees coded in $(r_{i} \, : \, i < \alpha)$ and $\vec{T}^{\alpha}$
  forms an independent sequence again.
 \end{itemize}
 
\end{Definition}

What is very important is that this definable $\omega_2$-sequence of
independent Suslin trees will be definable in certain outer models of $W_0$.

 \begin{Lemma}\label{Definability of Suslin trees}
 Suppose that $W^{\ast}$ is a set-generic, ccc extension of $W_0$.
 Then $W^{\ast}$ is still able to define the $\omega_2$-sequence of independent
 $W_0$-Suslin trees $\vec{T}$.
\end{Lemma}
\begin{proof}
 
 Note first that if $r \in W^{\ast}$ is a real coded by a triple of ordinals in $(\alpha, \beta, \gamma)$ in
 $W^{\ast}$, then there is a reflecting sequence $(N_{\xi} \, : \, \xi < \omega_1)$ in
 $W^{\ast}$, $\bigcup_{\xi < \omega_1} N_{\xi} = \gamma$, such that 
 for club-many $\xi$, $r = \bigcup_{\eta \in (\nu, \xi)} s_{\alpha \beta \gamma} (N_{\eta}, N_{\xi})$.
 As $W^{\ast}$ is a ccc-extension of $W_0$, there is a reflecting sequence $(P_{\xi} \, : \, \xi < \omega_1)$ which is 
 an element in $W_0$, and such that $C:=\{ \xi < \omega_1 \, : \, P_{\xi} = N_{\xi}\}$ is 
 club containing in $W^{\ast}$. Indeed, every element of $(N_{\xi} \, : \, \xi < \omega_1)$ is a countable set of ordinals in $W^{\ast}$, thus can be covered by a countable set of ordinals from $W_0$.  As a consequence the sequence $(N_{\xi} \, : \, \xi < \omega_1)$ can be transformed into a continuous, increasing sequence $(P_{\xi} \, : \, \xi < \omega_1)$ in $W_0$ which coincides on its limit point with $(N_{\xi} \, : \, \xi < \omega_1)$, just as desired.
 
 But as ccc extensions preserve stationarity, the set $$\{ \zeta < \omega_1 \, : \, \exists \nu < \zeta \, ( \bigcup_{\eta \in (\nu, \zeta))}
 s_{\alpha \beta \gamma} (P_{\eta}, P_{\zeta}) = r )\}$$ which is an element of $W_0$ must contain a club from 
 $W_0$. Hence $r$ is coded by the triple $(\alpha, \beta, \gamma)$ already in $W_0$.
 
 As a consequence $P(\omega_1)^{W_0}$ is definable in $W^{\ast}$, it will be precisely
 the set of subsets of $\omega_1$ which have reals which code it with the help of 
 the almost disjoint family $F$, and such that these reals are themself
 coded by triples of ordinals below $\omega_2$ in the sense of $(\ddagger)$.
 
 Thus $W^{\ast}$ can define $W_0$-Suslin trees and our wellorder $<$ on $P(\omega_1)^{W_0}$, hence will be able to define 
 the $\omega_2$-sequence of independent Suslin trees of $W_0$.
 
\end{proof}

As a last note we emphasize that the iteration, by the theorem of S. Shelah, will ensure that in $W_0$, $\NS$ is $\aleph_2$-saturated.

\begin{thm}\label{NS saturated}
 The nonstationary ideal $\NS$ is $\aleph_2$-saturated in $W_0$.
\end{thm}
\begin{proof}[Proof Sketch]
 This is basically just a repetition of Shelah's argument. We have to justify however that the added forcings, plus the fact that we use a nicely supported iteration instead of an iteration with revised countable support do not ruin the argument. At this point it is inevitable to refer the reader to \cite{SchindlerNS}. For the following it is necessary to have an understanding of the arguments presented there. 
 
 We summarize briefly the main outline. We force to ensure that the Woodin cardinal $\delta$ in fact is Woodin with $\diamondsuit$, thus we have access to a sequence $(a_{\kappa} \, : \, \kappa < \delta)$ such that for every $A \subset V_{\delta}$:
 
 \[ \{\kappa < \delta \, : \, A \cap V_{\kappa} = a_{\kappa} \land \kappa \text{ is } A\text{-strong up to } \delta \} \]
 
 is stationary in $\delta$. 
The $\diamondsuit$-sequence serves as a bookkeeping function to define an RCS-iteration inductively as follows: assume we arrived at stage $\alpha < \delta$ and we have defined already the iteration $\forceP_{\alpha}$ up to $\alpha$.
We split into cases
\begin{enumerate}
\item If $a_{\alpha}= \tau$ and $\tau$ is the $\forceP_{\alpha}$-name of a maximal antichain in $P(\omega_1) \slash \NS$, then force with the sealing forcing $\seal (\tau)$ provided it is semiproper.
\item Otherwise force with $Col(\omega_1, 2^{\aleph_2})$.
\end{enumerate}
 The iteration has length $\delta$, and the resulting universe $V^{\forceP_{\delta}}$ satisfies that $\NS$ is saturated.
 For assume not, and let $\tau$ be a $\forceP_{\delta}$-name for an $\delta$-long antichain in $P(\omega_1) \slash \NS$.
 Then, using the fact that at inaccessible stages of the iteration we take the direct limit, there will be a stage $\kappa < \delta$ such that $a_{\kappa}= \tau \cap V_{\kappa}$ and $\tau^{G_{\kappa}}$ is a maximal antichain in $V[G_{\kappa}]$. Thus our rules tell us that the sealing forcing $\seal (\tau)^{G_{\kappa}}$ is not semiproper in $V[G_{\kappa}]$. But now an involved argument shows that the sealing forcing in fact is semiproper at that stage, which gives a contradiction.

First a couple of words to justify the usage of nicely supported iterations instead of RCS-iterations. In Shelah's proof, the following three properties of RCS-iterations are exploited:
 \begin{enumerate}
 \item RCS-iterations preserve semiproperness. 
 
 \item The tail of an RCS-iteration look like an RCS-iteration from the intermediate models point of view.

 \item At limit stages $\kappa$ which are inaccessible, if the factors of the iteration below $\kappa$ have size below $\kappa$, then at stage $\kappa$ the direct limit is taken.
 \end{enumerate}
 
 Nice iterations share the properties 1 and 2. The third item is crucially used to ensure a stage $\kappa < \delta$ such that $a_{\kappa}= \tau \cap V_{\kappa}$ and $\tau^{G_{\kappa}}$ is a maximal antichain in $P(\omega_1) \slash \NS$.
If we use a nicely supported iteration instead, we can still find such a stage $\kappa$, as the set of Mahlo cardinals is stationary below $\delta$. Indeed it suffices to intersect the unbounded set $C:= \{ \kappa < \delta \, : \, (\tau \cap V_{\kappa})^{G_{\kappa}} \text{ is a maximal antichain} \}$ with the set of Mahlos below $\delta$ to obtain the desired stage $\kappa$. Having obtained such a $\kappa$, Shelah's proof just carries over word by word to obtain the desired contradiction.

The justification for using other semiproper forcings of size $< \delta$ is even easier once one knows Shelah's argument. Again everything carries over word by word with only minimal notational alterations.

\end{proof}

This ends our discussion of the first iteration $\forceP_{\delta}$ and the crucial properties of the resulting 
model $V[G] =W_0$. In the next section we will discuss how the second iteration, starting with $W_0$ as 
the ground model does look like.

\subsection{The second iteration.}
\subsubsection{An outline of the idea}
Let us quickly describe the situation we are in. We have obtained a model $W_0=V[G]$ with the following
properties:
\begin{enumerate}
 \item In $W_0$, $\NS$ is saturated.
 \item Every subset of $\omega_1$ is coded by a real.
 \item Every real is itself coded by a triple of ordinals below $\omega_2$
 relative to the ladder system $\vec{C}$. This gives rise to a definable wellorder of $P(\omega_1)$.
 \item There is an independent $\omega_2$-length sequence $\vec{T}=(\vec{T}^{\alpha}
 \, : \, \alpha< \omega_2)$ of independent $\omega_1$-blocks $\vec{T}^{\alpha}$ of Suslin trees
 which is 
 definable over $W_0$ and which is still definable in set-generic,
 ccc-extensions of $W_0$.
\end{enumerate}
The patient reader will notice that we have not touched the issue of the definabilty of the nonstationary ideal. The second iteration is entirely concerned with coding stationary subsets of $\omega_1$. We will use the fact that a Suslin tree $T$ can be destroyed in two mutually exclusive ways using forcings with the countable chain condition: either we add a branch to $T$ or an uncountable antichain. This enables us to write the characteristic function of stationary subsets of $\omega_1$ into the definable sequence $\vec{T}$. The fact that $\vec{T}$ is an independent sequence has as a consequence that the destruction of fixed elements of $\vec{T}$ will not affect the Suslinity of the other elements of $\vec{T}$.

Thus the following strategy is promising. We start an $\omega_2$-length iteration of forcings which either specialize or shoot a branch through elements of $\vec{T}$. The iteration uses finite support.
We will enumerate in an $\omega_2$-length list all the stationary subsets of $\omega_1$ in $W_0$, pick the first stationary set listed (given by some fixed bookkeeping function) and code the characteristic function of it into the first $\omega_1$-block of our definable list of Suslin trees $\vec{T}$. 
This coding will create new stationary subsets which we list again and code up using fresh elements of $\vec{T}$ which we have not destroyed yet. Bookkeeping will yield that after $\omega_2$-many stages we will catch our tail.

The result will be a generic extension $W_0[H]$ of $W_0$ by a forcing which has the countable chain condition. 
$W_0[H]$ can define its stationary subsets of $\omega_1$ in a new way: $S \subset \omega_1$ is stationary if and only if there is an $\omega_1$-block in $\vec{T}$ (note that $\vec{T}$ is still definable in $W_0[H]$ by Lemma \ref{Definability of Suslin trees}) such that the characteristic function of $S$ is written into this $\omega_1$-block of elements of $\vec{T}$.

A calculation yields that this new definition of stationarity is boldface $\Sigma_2$ over the $H(\omega_2)$ of $W_0[H]$, thus it seems like we have not gained anything substantial. But it will be possible to slightly alter the above iteration which makes a simple definition of a class of $\aleph_1$-sized, so-called suitable models possible, which can be utilized to read off the created information on the sequence of Suslin trees. This will buy us one quantifier and we eventually arrive at a $\Sigma_1 (\vec{C}, \vec{T}^0)$-predicate for stationarity.

Of course everything just said would be rendered pointless, if $\NS$ stops to be saturated
after we perform the sketched iteration. We have to argue that $\NS$ remains saturated after forcing with the coding forcings which are ccc.

\subsubsection{Chang's Conjecture and the indestructibility of the saturation of $\NS$ under ccc forcings}

The indestructibility of the saturation of $\NS$ under ccc forcings is in fact a very delicate topic \footnote{We thank A. Lietz for pointing that out, for finding an error in an earlier version plus finding its fix.} and tightly connected to Chang's Conjecture and the semiproperness of Namba forcing as noted by S. Shelah.
First recall
\begin{Definition}
Chang's Conjecture ($\CC$) says that for every function $F:[\omega_2]^{<\omega} \rightarrow \omega_2$, there is a set $X \subset \omega_2$ of ordertype $\omega_1$ which is closed under $F$, i.e. $F"[X]^{<\omega} \subset X$.
\end{Definition}
It is known that the indestructibility of the saturation of $\NS$ follows from Chang's Conjecture (see \cite{Larson} pp. 583 for a proof).
Hence we shall argue that in $W_0$ Chang's Conjecture is true.

We take advantage of the following results which can be found in \cite{Shelah}. First recall Namba forcing, which is the canonical forcing for changing the cofinality of $\omega_2$ to $\omega$. The assertion "Namba forcing is semiproper" is known to have large cardinal strength, it is equivalent to a strong form of Chang's Conjecture \footnote{We caution the reader that the notation for various strengthenings of Chang's Conjecture is very inconsistent across the literature.}, which itself is known to be forceable from a measurable cardinal.
Semiproperness of Namba forcing can be characterized with winning strategies for player II in a certain two player game (see \cite{Shelah}, XII, Definition 2.1). 
\begin{Definition}
The two player game $\Game(\{\aleph_1 \}, \omega, \aleph_2)$ of length $\omega$ is defined as follows. At turn n, Player I  plays a function $f_n: \omega_2 \rightarrow \omega_1$ and player II responds with an ordinal $\xi_n < \omega_1$. We say that player II wins iff the set 
\[ A:= \{ i < \omega_2 \, : \, \forall n \exists m (f_n(i) < \xi_m) \} \]
is unbounded in $\omega_2$.
\end{Definition}
Shelah has shown (see \cite{Shelah} XII, Theorem 2.2, Theorem 2.5 (1)) the following results.
\begin{thm}
 Namba forcing is semiproper if and only if II has a winning strategy in $\Game(\{\aleph_1 \}, \omega, \aleph_2)$.
If player II has a winning strategy in $\Game(\{\aleph_1 \}, \omega, \aleph_2)$, then $\CC$ is true.
\end{thm}
 Before we continue one last definition of the semiproper game which is well-known to characterize semiproperness. We will use Shelah's original notation again, even though several of his parameters are meaningless in our context.
\begin{Definition}
Let $\forceP$ be a notion of forcing, $p \in \forceP$, then the semiproper Game $P\Game^{\omega}_0(p, \forceP,\aleph_1)$ is defined as follows: player I plays $\forceP$-names of countable ordinals $\dot{\zeta}_n$, $n \in \omega$, player II responds with countable ordinals $\xi_n$. In the end player II wins if and only if there is a condition $q \in \forceP$, $q<p$ and \[q \Vdash \forall n \in \omega \exists m (\dot{\zeta}_n=\xi_m)\]
\end{Definition}
As is well-known, a forcing $\forceP$ is semiproper if and only if for every $p \in \forceP$, player II has a winning strategy in $P\Game^{\omega}_0(p, \forceP,\aleph_1)$

We finally collected the relevant notions to prove that in $W_0$, Chang's Conjecture is true.  The proof of the claim owes an obvious debt to Theorem 1.9 from chapter XIII of \cite{Shelah}.
\begin{thm}
In $W_0$, Chang's Conjecture is true and consequentially the saturation of $\NS$ is indestructible by ccc forcings.
\end{thm}
\begin{proof}
The proof will make use of the following 
\begin{Claim}
Suppose that $\lambda$ is a measurable cardinal and $(\forceP_{\alpha}, \dot{\forceQ}_{\alpha} \, : \, \alpha < \lambda)$ is a nicely supported iteration such that 
\begin{enumerate}
\item $\forall \alpha < \lambda (\Vdash_{\forceP_{\alpha}} \dot{\forceQ}_{\alpha}$ is semiproper).
\item $\forall \alpha < \lambda (|\forceP_{\alpha}| < \lambda)$.
\item For every $\alpha < \lambda$, $\Vdash_{\forceP_{\alpha+1}} (2^{\aleph_2})^{V[\forceP_{\alpha}]}=\aleph_1$.
\end{enumerate}
then player II has a winning strategy in the game $\Game(\{\aleph_1 \}, \omega, \aleph_2)$ in $V[G_{\lambda}]$, where $G_{\lambda}$ is a $\forceP_{\lambda}$-generic filter over $V$. 

\end{Claim}

Once we have shown the above, we can prove the theorem as follows. The set of measurable cardinals below our Woodin $\delta$ is stationary. Moreover the iteration $\forceP_{\delta}$ we used to obtain $W_0$ satisfies the three items above. Any function $F: [\omega_2]^{<\omega} \rightarrow \omega_2$ in $V[G_{\delta}]=W_0$ has a name $\dot{F}$ of size $\delta$, and, by the $\delta$-cc of $\forceP_{\delta}$, the set of measurable $\lambda < \delta$ such that $(\dot{F} \cap V_{\lambda})^{G_{\lambda}}$ is, in $V[G_{\lambda}]$, a function from $ [\aleph_2]^{<\omega}$ to $\omega_2$ is stationary below $\delta$.
In particular, there is a $\lambda< \delta$ measurable as above and player II has a winning strategy in  $\Game (\{ \aleph_1\},\omega,\aleph_2)$ in $V[G_{\lambda}]$. So $\CC$ is true in $V[G_{\lambda}]$. Thus $\dot{F} \cap V_{\lambda}$ as evaluated by $G_{\lambda}$ has a closed subset $X$ of ordertype $\omega_1$ in $V[G_{\lambda}]$. But this $X$ also witnesses that $\CC$ is true in $W_0$.

So we shall show the claim. We fix a normal, $\lambda$-complete ultrafilter $U$ on $\lambda$.
If $\alpha < \lambda$ then, as the tail $\forceP_{\alpha,\lambda}$ is a nice iteration of semiproper forcings, we know that $\Vdash_{\forceP_{\alpha}} \forceP_{\alpha,\lambda}$ is semiproper. Hence, player II has a winning strategy (in $V^{\forceP_{\alpha}}$) in the semiproper game $P\Game^{\omega}_0(p, \forceP,\aleph_1)$ for any $p \in \forceP_{\alpha,\lambda}$.

We shall use these winning strategies  to define a winning strategy for player II in $\Game(\{\aleph_1\},\omega,\aleph_2)$ in $V[G_{\lambda}]$. Suppose the play of the game arrived at stage $n < \omega$, and player I chooses a function $f_n: \omega_2 \rightarrow \omega_1$. Then player II responds with four sets $A_n$, $\dot{f_n}$, $\alpha_n$ and $\xi_n$, where $\xi_n$ is the ordinal which is relevant as response in $\Game(\{\aleph_1\},\omega, \aleph_2)$, and which should satisfy:
\begin{enumerate}
\item $\alpha_n$ is an ordinal less than $\lambda$, but bigger than all previous $\alpha_m$, $m<n$.
\item $A_n \subset  (\alpha_n,\lambda)$, $A_n \in V[G_{\alpha_n}]$, and $A_n$ is in the ultrafilter generated by $U$ in $V[G_{\alpha_n}]$. Moreover $A_n$ is a subset of the previous $A_m$ and $A_n$ consists entirely of inaccessible cardinals $\kappa$ such that $\forall \kappa \in A_n \forall i < \kappa (|\forceP_i|< \kappa)$.
\item $\dot{f_n}$ is a $\forceP_{\lambda}$-name of $f_n$ (say the least such name in some previously fixed wellorder).
\item For every $\kappa \in A_n$, $(\dot{f}_m (\kappa), \xi_m)_{m \le n}$ is an initial segment of a play of the game $P\Game^{\omega}_0(1, \forceP_{\kappa, \lambda},\aleph_1)$ in $V[G_{\alpha_n}]$, where player II uses his winning strategy.
\end{enumerate}
We shall show that player II can always play as described above.  
Assume that for $m \le n-1$, $A_m, \dot{f_m}, \xi_m, \alpha_m$ is defined.
For every $\kappa \in A_{n-1}$, consider the instance of $P\Game^{\omega}_0 (1,\forceP_{\kappa,\lambda}, \aleph_1)$ in $V[G_{\kappa}]$:

\begin{table}[h]
\begin{tabular}{l|l}
I & $\dot{f}_0(\kappa)$ \qquad \quad $\dot{f}_1(\kappa)$ \qquad \quad $\dot{f}_2(\kappa)$ \qquad \qquad $\dots$ \qquad \qquad  $\dot{f}_n(\kappa)$ \\
\hline
II & \qquad  \qquad $\xi_0$ \qquad \qquad  $\xi_1$ \qquad  \qquad $\xi_2$ \qquad  \dots \quad $\xi_{n-1}$
   
\end{tabular}
\end{table}
Note here that we play the game for the tail forcing $\forceP_{\kappa,\lambda}$ which is still semiproper. Hence player II must have a winning strategy for each game  $P\Game^{\omega}_0 (1,\forceP_{\kappa,\lambda}, \aleph_1), \kappa \in A_{n-1}$. Let $\xi(\kappa)$ denote the $\forceP_{\kappa}$-name of an ordinal $< \omega_1$ which II has to play when following his winning strategy.

Now for every $\kappa \in A_{n-1}$, $\forceP_{\kappa}=\bigcup_{\alpha < \kappa} \forceP_{\alpha}$, as we assumed that every $A_m$ consists entirely of inaccessible cardinals and we use a nice iteration. Moreover the ultrafilter $U$ on $\lambda$ generates an ultrafilter on $\lambda$ in $V[G_{\kappa}]$, which we will denote again with $U$. As $U$ is normal, there will be a stage $\alpha_n < \lambda$ and a set $A_n \in U$, $A_{n-1} \supset A_n$, such that for every $\kappa \in A_{n-1}$, every $\forceP_{\kappa}$ name $\xi(\kappa)$ is in fact a $\forceP_{\alpha_n}$-name. We can assume that every $\xi(\kappa)$ is a nice $\forceP_{\kappa}$-name for a countable ordinal, thus there are only $<\lambda$-many such nice names, and as a consequence there is a set $A_{n} \in U$, $A_n \subset [\alpha_n,\lambda)$ and a $\forceP_{\alpha_n}$-name $\xi$ such that $\Vdash_{\alpha_n} \xi < \omega_1$, such that for every $\kappa \in A_n$, if player II plays according to her winning strategy then the play looks like

\begin{table}[h]
\begin{tabular}{l|l}
I & $\dot{f}_0(\kappa)$ \qquad \quad $\dot{f}_1(\kappa)$ \qquad \quad $\dot{f}_2(\kappa)$ \qquad \qquad $\dots$ \qquad \quad  $\dot{f}_n(\kappa)$ \\
\hline
II & \qquad  \qquad $\xi_0$ \qquad \qquad  $\xi_1$ \qquad  \qquad $\xi_2$ \qquad  \dots \quad $\xi_{n-1}$\qquad\qquad $\xi$
   
\end{tabular}
\end{table}
\begin{flushleft} We set $\xi_n:=\xi^{G_{\alpha_n}}$. It is clear that $A_n$, $\alpha_n$, $\xi_n$ and $\dot{f}_n$ are as desired.
\end{flushleft}

We shall show that the just defined strategy is indeed a winning strategy for player II in $\Game(\{\aleph_1\},\omega, \aleph_2)$ in the universe $V[G_{\lambda}]$. Let $f_0,f_1,...$ be the functions from $\omega_2$ to $\omega_1$ which are played by player I. Let $\dot{f_n}$ be the $\forceP_{\lambda}$-names. Player II plays according to the strategy defined above. In the course of the play, II will construct a sequence of quadruples $(A_n)_{n \in \omega}, (\dot{f}_n)_{n \in \omega}, (\alpha_n)_{n \in \omega}, (\xi_n)_{n \in \omega}$. We let $A_{\omega}:= \bigcap_{n \in \omega} A_n$ and note that $A_{\omega} \in U$, hence $A_{\omega}$ is unbounded in $\lambda$.
Likewise, $sup_{n \in \omega} \alpha_n < \lambda$.
If $\kappa \in A_{\omega}$, then the play

\begin{table}[h]
\begin{tabular}{l|l}
I & $\dot{f}_0(\kappa)^{G_{\lambda}}$ \qquad \quad $\dot{f}_1(\kappa)^{G_{\lambda}}$ \qquad \quad $\dot{f}_2(\kappa)^{G_{\lambda}}$ \qquad \qquad $\dots$ \\
\hline
II & \qquad  \qquad $\xi_0$ \qquad \qquad \qquad  $\xi_1$ \qquad \qquad  \qquad $\xi_2$ \qquad  $\dots$
\end{tabular}
\end{table}
is a play in $V[G_{\alpha_{\omega}}]$, where player II follows his winning strategy in $P\Game^{\omega}_0(1, \forceP_{\kappa,\lambda}, \aleph_1)$, hence for any $p \in \forceP_{\kappa \lambda}$ there is a $q< p$ such that
\[q \Vdash_{\forceP_{\kappa \lambda}} \forall n \in \omega (\dot{f}_n(\kappa) < \sup_{n \in \omega} \xi_n).\] In particular, in $V[G_{\lambda}]$, $\forall n \in \omega (f_n (\kappa) < sup_{n \in \omega} \xi_n)$.
Thus we found an unbounded set $A_{\omega} \subset \lambda=\aleph_2^{V[G_{\lambda}]}$ such that $\forall i \in A_{\omega}$ $(\forall n \in \omega \exists m \in \omega (f_n(i) < \xi_m))$, so for $\Game(\{\aleph_1\},\omega, \aleph_2)$, there is a winning strategy for player II in $V[G_{\lambda}]$, as desired.
\end{proof}

\subsubsection{Suitability}
We begin to define thoroughly how the second iteration, using $W_0$ as a ground model does look like. 
We already hinted that, in order to lower the complexity of 
a description of stationarity we need a new notion for suitable models which
will be able to define the sequence of Suslin trees $\vec{T}$ correctly.
With the notion of suitability it will become possible 
to witness stationarity already in $\aleph_1$-sized $\ZFP$ models,
as we shall see soon.
\begin{Definition}
 Let $M$ be a transitive model of $\ZFP$ of size $\aleph_1$. We say that $M$ is pre-suitable if it satisfies the 
 following list of properties:
 \begin{enumerate}
  \item $\{\vec{C}, \vec{T}^0\}  \subset M$.
  \item $\aleph_1$ is the biggest cardinal in $M$ and $M \models \forall x (|x|\le \aleph_1)$.
  \item Every set in $M$ has a real in $M$ which codes it in the sense of almost disjoint coding
  relative to the fixed family of almost disjoint reals $F$.
  \item Every real in $M$ is coded by a triple of ordinals in $M$, i.e. 
  if $r \in M$ then there is a triple $(\alpha, \beta, \gamma) \in M$ and a reflecting
  sequence $(N_{\xi} \, : \, \xi < \omega_1)\in M$ which code $r$.
  \item Every triple of ordinals in $M$ is stabilized in $M$: for $(\alpha, \beta, \gamma)$
  there is a reflecting sequence $(P_{\xi} \, : \, \xi < \omega_1) \in M$ which witnesses that
  $(\alpha, \beta, \gamma)$ is stabilized.
 \end{enumerate}

\end{Definition}

Note that the statement ''$M$ is a pre-suitable model`` is completely internal in $M$ and hence a $\Sigma_1(\vec{C}, \vec{T}^0)$-formula.
Further note that by the proof of Lemma \ref{Definability of Suslin trees}, if $W^{\ast}$ is a ccc extension of $W_0$ and $M$ is a pre-suitable
model in $W^{\ast}$ then $M \subset W_0$, as ccc extensions will not add new reflecting sequences.
\begin{Definition}
 Let $M$ be a pre-suitable model. We say that $M$ is $W_0$-absolute for Susliness
 if $T \in M$ is an element from $W_0$ and $M \models T \text{ is Suslin}$, then $T$ is Suslin in $W_0$.
 Likewise we say that $M$ is $W_0$-absolute for stationarity if
 $S \in M$ and $M$ thinks that
 $S$ is a stationary subset of $\omega_1$ then $S$ is a stationary subset of $\omega_1$ in $W_0$. 
 A pre-suitable model which is $W_0$-absolute for stationarity and Susliness is called suitable.
\end{Definition}

We have already seen in Lemma \ref{Definability of Suslin trees} that ccc extensions of $W_0$ will still be able to define our
$\omega_2$-sequence of independent $W_0$-Suslin trees $\vec{T}$. With the notion of suitability
we can localize this property in the following sense:
\begin{Lemma}
 Let $W^{\ast}$ be a ccc extension of $W_1$, and let $M \in W^{\ast}$ be a suitable model.
 If $M$ computes the $\omega_2$-length sequence of independent Suslin trees from $W_0$ using its local wellorder $\unlhd_M$,
 then the computation will be correct, i.e. $\vec{T}^M = \vec{T} \cap M$.
\end{Lemma}
\begin{proof}
 We shall show inductively that for every $\eta \in M$, the $\eta$-th
 block of $\vec{T}$, $\vec{T}^{\eta}$ will be computed correctly by $M$ (if an element of $M$).
 For $\eta=0$ this is true as $\vec{T}^0$ is by definition of suitability an element of $M$.
 Let $\eta \in M$ and assume by induction that the sequence $\vec{T}$ up to the $\eta$-th block is computed correctly by $M$. We shall show that $M$ computes the $\eta+1$-th block correctly. Recall that $\vec{T}^{\eta+1}$ was defined to be the $\unlhd$-least $\omega_1$-block of independent Suslin trees such that $\bigcup_{\beta \le\eta} \vec{T}^{\beta}$ concatenated with $\vec{T}^{\eta+1}$ remains an independent sequence in $W_0$. 
 
 Assume for a contradiction that the suitable $M$ computes $\vec{T}'$ as its own different version of $\vec{T}^{\eta+1}$. Thus there is a real
 $r'$ which codes $\vec{T}'$, and $r'$ itself is coded into a triple of $M$-ordinals $(\alpha', \beta', \gamma')< \omega_2^{M}$. Let $r_{\eta+1}$ be the real which codes $\vec{T}^{\eta+1}$.
 We claim that $r_{\eta+1} < r'$ by which we mean that the least triple of ordinals which codes $r_{\eta+1}$ is antilexicographically less than the least triple which codes $r'$. Otherwise $r'< r_{\eta+1}$ and by the Suslin-absoluteness of $M$
 the independent-$M$-Suslin trees coded into $r'$ would be an independent $\omega_1$-sequence of Suslin trees in $W_0$,
 moreover they would still form an independent sequence when concatenated with $\vec{T}^{\eta}$ in $W_0$ which
 is a contradiction to the way $\vec{T}^{\eta+1}$ was defined.
 
 So $r_{\eta+1}<r'$, so the least triple of ordinals $(\alpha, \beta, \gamma)$ coding $r_{\eta+1}$
 is antilexicographically less than $(\alpha', \beta', \gamma')$. Note that the suitability of $M$ implies
 that $(\alpha, \beta, \gamma)$ is stabilized in $M$. Thus there is a reflecting sequence
 $(P_{\xi} \, : \, \xi < \omega_1)$ in $M$ witnessing this. As $W^{\ast}$ is a ccc extension of $W_0$ we can assume
 that the sequence is in fact an element of $W_0$. At the same time there is a reflecting 
 sequence $(N_{\xi} \, : \, \xi < \omega_1)$ in $W_0$ which witnesses that $r_{\eta+1}$ is coded by $(\alpha, \beta, \gamma)$.
 By the continuity of both sequences, there is a club $C$ in $W_0$ such that
 $\forall \xi \in C ( N_{\xi} = P_{\xi}).$ Thus the limit points of $C$ witness that
 in fact the sequence $(P_{\xi} \, : \, \xi < \omega_1)\in M$ codes $r_{\eta+1}$ as well but the club
 $C$ is in $W_0$, so we need an additional argument to finish. Recall that the suitability of $M$
 implies that $M$ is absolute for stationarity, thus if the set 
 $\{ \xi < \omega_1 \, : \, \exists \nu < \xi ( \bigcup_{\zeta \in (\nu, \xi)} s_{\alpha \beta \gamma} (P_{\zeta}, P_{\xi}) \ne r_{\eta+1}) \}$
 would be stationary in $M$ it would be stationary in $W_0$ which is a contradiction.
 So $M$ computes $r_{\eta+1}$ correctly and the rest of the inductive argument can be repeated exactly as 
 above to show that $\vec{T}^M = \vec{T} \upharpoonright (M \cap Ord)$ as desired.
 
  \end{proof}

So suitable models will compute $\vec{T}$ correctly, and if we start to write information into $\vec{T}$, a suitable model can be used to read it off. In $W_0$ cofinally many suitable model below $\omega_2$ exist.

\begin{Lemma}
Work in $W_0$. If we set
$$ P:= \{ \eta < \omega_2 \, : \, \exists M (M \text{ is suitable } \land M \cap Ord = \eta\}$$ then $P$ is unbounded in $\omega_2$.
\end{Lemma}
\begin{proof}
Recall the iteration $(\forceP_{\alpha} \, : \, \alpha < \delta)$ to force $W_0$. Whenever we are at an intermediate stage $\kappa < \delta$ such that $\kappa$ is Mahlo then, if $G_{\kappa}$ denotes the generic filter for $\forceP_{\kappa}$, $H(\omega_2)^{V[G_{\kappa}]}$ will be a pre-suitable model.
Moreover, by the properties of nicely supported iterations, stationary subsets of $\omega_1$ in $H(\omega_2)^{V[G_{\kappa}]}$ will remain stationary in $W_0$ and Suslin trees in $H(\omega_2)^{V[G_{\kappa}]}$ will remain Suslin trees in $W_0$, as the tail iteration $\forceP_{[\kappa, \delta)}$ is a nicely supported iteration of semiproper, Suslin tree preserving notions of forcing over the ground model $V[G_{\kappa}]$. Thus every  $H(\omega_2)^{V[G_{\kappa}]}$, for $\kappa$ Mahlo, is a suitable model which gives the assertion of the Lemma.

\end{proof}

We shall use forcing to obtain a universe in which the just defined set of suitable models 
\begin{itemize}
\item[$U'$]
$:=\{ M \, : \, \exists \kappa < \delta (\kappa \text{ is Mahlo } \land M=H(\omega_2)^{V[G_{\kappa}]} \}$
\end{itemize} 
becomes easily definable. 
This will be our first forcing in the second iteration. We note that every $M \in U'$ is itself coded by a real relative to the almost disjoint family $F$. We want to use  the variant of almost disjoint coding forcing to code up  \begin{itemize}
\item[$U$] $:= \{ r_M \, : \, r_M $ is the least almost disjoint code for a subset of $\omega_1$ which codes $M \in U' \}$
\end{itemize}
into one real $r_{U}$. Recall that $\mathbb{A}(U)$ is a forcing of size $|U|=\delta$ which is Knaster and which adds a real $r_U$ such that in $W_0^{\mathbb{A}(U)}$ the following holds:
\begin{itemize}
\item[$(\ast)$] $\forall x \in 2^{\omega} \cap W_0 \,( x \in U \leftrightarrow r_U \cap S(x)$ is finite$)$.
\end{itemize}
In a next step we will code the characteristic function of the real $r_U$ into a pattern on $\vec{T}^0$. We use the fact that a Suslin tree can generically be destroyed in two mutually exclusive ways. We can generically add a branch or generically add an antichain without adding a branch.
We fix the first $\omega$-block of independent Suslin trees $\vec{T}^0$ and we let 
$\mathbb{C} := \prod_{n \in \omega} \forceP_n$ with finite support, where
\begin{equation*}
\forceP_n = \begin{cases}
T^{0}_n  & \text{ if } r_U(n)=1 \\ Sp(T^{0}_n) & \text{ if } r_U(n)=0
\end{cases}
\end{equation*}
and $T^{0}_n$ denotes the forcing notion one obtains when forcing with nodes of $T^{0}_n$ as conditions, and
$Sp(T^{0}_n)$ denotes Baumgartner's forcing which specializes $T^{0}_n$ with finite conditions and which is known to be ccc (see \cite{Baumgartner}).
Iterations of the just described form always have the countable chain condition.

\begin{Lemma}\label{ccc Coding}
Let $\vec{T}= (T_{\alpha} \, : \, \alpha < \eta)$ be an independent sequence of Suslin trees of length $\eta$. Let $f: \eta \rightarrow 2$ be an arbitrary function and let $T_{\alpha}$ also denote the partial order when forcing with the tree $T_{\alpha}$ and let $Sp(T_{\alpha})$ be the forcing which specializes the tree $T_{\alpha}$.

Then if we consider the finitely supported product $\mathbb{C} := \prod_{\beta < \eta} \forceP_{\beta}$ where
\begin{equation*}
\forceP_{
\beta } = \begin{cases}
T_{\beta}  & \text{ if } f(\beta)=1 \\ Sp(T_{\beta}) & \text{ if } f(\beta)=0
\end{cases}
\end{equation*}
then $\mathbb{C}$ has the countable chain condition.
\end{Lemma}
\begin{proof}
Fix an arbitrary $f: \eta \rightarrow 2$. We prove the Lemma using induction over the length $\eta$. The limit case is true as we use finite support. Thus assume the assertion of the Lemma is true for $\eta$ and we want to show it is true for products of length $\eta+1$. Assume for a contradiction that $\prod_{\alpha < \eta+1}\forceP_{\alpha}$ (according to $f$) does not have the countable chain condition.
Hence the tree $T_{\eta}$ is not a Suslin tree in $V^{\prod_{\alpha < \eta} \forceP_{\alpha}}$, as otherwise both forcings $T_{\eta}$ and $Sp(T_{\eta})$ would have the countable chain condition.
But $\prod_{\alpha < \eta} \forceP_{\alpha} \ast T_{\eta} = \prod_{\alpha < \eta} \forceP_{\alpha} \times T_{\eta} = T_{\eta} \times \prod_{\alpha < \eta} \forceP_{\alpha}$, and the latter is a forcing with the countable chain condition. Indeed as $T_{\eta}$ does not touch the Susliness of any member of $\vec{T}$ besides $T_{\eta}$,  $(T_{\alpha} \, : \, \alpha < \eta)$ is an independent sequence of Suslin trees in $V^{T_{\eta}}$, thus
by induction hypothesis,  $\prod_{\alpha < \eta} \forceP_{\alpha}$ has the countable chain condition in $V^{T_{\eta}}$, so it has the countable chain condition in $V$.
\end{proof}

In particular this means that $\mathbb{C}$ as defined above is a forcing with the countable chain condition which writes the real $r_U$ into a pattern of 0 and 1's on the sequence $\vec{T}^0$ of Suslin trees. We
let $H_0$ be
a generic filter for $\mathbb{A}_U$ over $W_0$ and
let
 $H_1$ denote a $W_0[H_0]$-generic filter for $\mathbb{C}$. The resulting model $W_1:=W_0[H_0][H_1]$ is the ground model for a second iteration we define later.
 
\begin{Lemma}
 Let $W_1$ be the universe $W_0[H_0][H_1]$

 Then
 $W_1:= W_0[H_0][H_1]$ is a ccc extension of $W_0$ which satisfies:
 \begin{itemize}
 \item $ r_U(n)=1$ if and only if $T^{0}_n$ has a branch.
 \item $ r_U(n)=0$ if and only if $T^{0}_n$ is special.
\end{itemize}
\end{Lemma}

\begin{proof}
It suffices to show that $W_0[H_0]$ is a Suslin-tree-preserving extension of $W_0$. This is clear as $\mathbb{A}(U)$ is Knaster. 

\end{proof}
To summarize we arrived at a situation where the real $r_U$, which captures all the information about a set of suitable models, is written into the sequence of Suslin trees $\vec{T}^0$. Consequentially, any transitive, $\aleph_1$-sized model  $M$ which contains $\vec{C}$, $\vec{T}^0$ and which sees that every tree in $\vec{T}$ is destroyed can compute the real $r_U$ and thus has access to a set of suitable models, which in turn can be used to compute $\vec{T}$ inside $M$ in a correct way. This line of reasoning remains sound in all outer ccc extensions of $W_1$ as we shall see. Thus the ability of finding $\vec{T}$ in suitable models gives rise to the possibility of using $\vec{T}$ for additional coding arguments over $W_1$.

\begin{Lemma}\label{sigma_1 set of suitable models}
 Let $W^{\ast}$ be a ccc extension of $W_1$ then there is a $\Sigma_1(\vec{C}, \vec{T}^0)$-formula $\Phi(v)$ such that whenever $x \in 2^{\omega}$ and $W^{\ast} \models \Phi(x)$ then $x$ is the almost disjoint code for a suitable model. 
\end{Lemma}
\begin{proof}
The formula $\Phi(x)$, is defined as follows:

 \begin{itemize}
 \item[$\Phi(x)$]   if and only if there is a transitive, $\aleph_1$-sized $\ZFP$ model $N$ which contains $\vec{C}$ and $\vec{T}^0$ such that the following holds in $N$:  
   \begin{itemize}
    \item $N$ sees a full pattern on $\vec{T}^0$, i.e. for every $n \in \omega$ and 
   every member $T^0_{n}$, $N$ has either a branch through $T^0_{n}$ 
   or a function which specializes $T^0_{n}$. 
\item  The pattern on $\vec{T}^0$ corresponds to the characteristic function of a real $r$ and $N$ thinks that there is a pre-suitable $N'$ such that $x \in N'$ and $(S(x) \cap r)$ is a finite set.
 \end{itemize}
   \end{itemize}
This is a $\Sigma_1(\vec{C}, \vec{T}^0)$-formula, as it is of the form $\exists N( N\models ...)$. 
We shall show that whenever $x$ is a real from $W^{\ast}$ such that $\Phi(x)$ holds then $x$ is the almost disjoint code for a suitable model.
Recall the set $U'$ and $U$ we defined above.
Note first that $N$ has access to the set $U$ which is the set of reals which are codes for the set of suitable models $U'$.
This is clear as $\vec{T}^0 \in N$, thus if $N$ sees a pattern on $\vec{T}^0$, this pattern must be the unique pattern from $W_1$, which corresponds to the characteristic function of the real $r_U$. The statement "$N'$ is a pre-suitable model" is a $\Delta_1(\vec{C})$ formula in $N'$, thus absolute for the transitive $N$. So if $N$ thinks that there is a pre-suitable $N'$ which contains $x$ as an element, then this is true in $W^{\ast}$. As $W^{\ast}$ is a ccc extension of $W_1$ and hence of $W_0$ as well, we know already that we can express the statement "$y \in 2^{\omega} \cap W_0$" in $W^{\ast}$ as "$\exists P (P$ is pre-suitable and $y \in P)$". Thus if $N$ thinks that there is a pre-suitable $N'$ which contains $x$ as an element, then $x \in W_0$, and now $(\ast)$ from above applies to conclude that indeed $x \in U$ which is what we wanted.
\end{proof}
\subsubsection{Coding stationarity}
We work now over $W_1$ and start a finite support iteration $(\forceQ_{\alpha} \, : \, \alpha < \omega_2)$ of length $\omega_2=\delta$. We pick the G\"odel pairing function as our bookkeeping function $F: \omega_2 \times \omega_2 \rightarrow \omega_2$. Recall that $F$ is a bijection with the additional property that $\forall \beta < \omega_2( \beta \ge max((\beta_1, \beta_2))= F^{-1} (\beta)$. The iteration is defined recursively as follows. Assume we are at stage $\alpha < \omega_2$ and we have already defined $\forceQ_{\alpha}$. Let $H_{\alpha}$ be the generic filter for $\forceQ_{\alpha}$ over $W_1$. We also assume inductively that the $\alpha$-th $\omega_1$-block $\vec{T}^{\alpha}$ is still an independent sequence of Suslin trees in $W_1[H_{\alpha}]$, and that $W_1[H_{\alpha}] \models |P(\omega_1) \slash \NS)| = \aleph_2$. We want to describe the next forcing notion $\dot{\forceQ}_{\alpha}$ we will use.

Let $(\alpha_1, \alpha_2) = F^{-1}(\alpha)$. We consider the universe $W_1[H_{\alpha_1}]$ and a fixed enumeration $(S^{\alpha_1}_i \, : \, i < \omega_2)$ of the stationary subsets of $\omega_1$ in $W_1[H_{\alpha}]$.  Then we pick the  $\alpha_2$-th element $S^{\alpha_1}_{\alpha_2}$ of $(S^{\alpha_1}_i \, : \, i < \omega_2)$ and code the characteristic function of  $S^{\alpha_1}_{\alpha_2}$ into the $\alpha$-th $\omega_1$ block $\vec{T}^{\alpha}= (T^{\alpha}_i \, : \, i < \omega_1)$ of our fixed independent sequence of Suslin trees $\vec{T}$. 

To be more precise we let $r^{\alpha_1}_{\alpha_2}: \omega_1 \rightarrow 2$ denote the characteristic function of $S^{\alpha_1}_{\alpha_2}$. At stage $\alpha$ we will force with
$\dot{\forceQ}_{\alpha} := \prod_{\beta < \omega_1} \forceP_{\beta}$ with finite support, where
\begin{equation*}
\forceP_{
\beta } = \begin{cases}
T^{\alpha}_{\beta}  & \text{ if } r^{\alpha_1}_{\alpha_2}(\beta)=1 \\ Sp(T^{\alpha}_{\beta}) & \text{ if } r^{\alpha_1}_{\alpha_2}(\beta)=0
\end{cases}
\end{equation*}
We let $h_{\alpha+1}$ be a generic filter for $\dot{\forceR}_{\alpha}$ over $W_1[H_{\alpha}]$, set $H_{\alpha+1}= H_{\alpha} \ast h_{\alpha+1}$ and continue. Note that $\dot{\forceQ}_{\alpha}$ has the countable chain condition as we assumed that $\vec{T}^{\alpha}$ is an independent sequence of Suslin trees in $W_1[H_{\alpha}]$ and by Lemma \ref{ccc Coding}. We shall prove now the property of $(\forceQ_{\alpha} \, : \, \alpha < \omega_2)$ which was assumed inductively in the definition of the iteration, namely that tails of $\vec{T}$ always remain an independent sequence of Suslin trees.

\begin{Lemma}
Let $(\forceQ_{\alpha} \, : \, \alpha \le \omega_2)$ be the just defined iteration over $W_1$ and let $\vec{T}$ be our sequence of independent Suslin trees. We write $\vec{T}^{>\alpha}$ for $(\vec{T}^{\beta} \,  :  \, \alpha < \beta < \omega_2)$. For every $\alpha < \omega_2$, if $H_{\alpha}$ is a generic filter for $\forceQ_{\alpha}$ then $W_1[H_{\alpha}]$ is a ccc extension of $W_1$ and $$ W_1[H_{\alpha}] \models \vec{T}^{>\alpha} \text{ is an independent sequence of (blocks of) Suslin trees.}$$
Moreover at stage $\alpha$ every element of $\vec{T}^{\gamma}$, $\gamma < \alpha$ has been destroyed.
\end{Lemma}
\begin{proof}
We prove the Lemma via induction on $\alpha < \omega_2$. For $\alpha=0$ the Lemma is true. Assume now that $\forceQ_{\alpha}$ is a ccc forcing and $\vec{T}^{>\alpha}$ is an independent sequence of Suslin trees in $W_1^{\forceQ_{\alpha}}$. We shall show that $\forceQ_{\alpha+1}=\forceQ_{\alpha} \ast \dot{\forceQ}_{\alpha}$ (where $\dot{\forceQ}_{\alpha}$ is an $\omega_1$-length product which codes some set in $P(\omega_1)^{W_1[G_{\alpha}]}$) is ccc and that $T^{>\alpha+\omega_1}$ is an independent sequence of Suslin trees in $W_1^{\forceQ_{\alpha+1}}$.

That $\forceQ_{\alpha+1}=\forceQ_{\alpha} \ast \dot{\forceQ}_{\alpha}$ has the ccc follows from our inductive hypothesis and Lemma \ref{ccc Coding}, applied in the universe $W_1^{\forceQ_{\alpha}}$.

That $\vec{T}^{> \alpha + \omega_1}$ remains independent in $W_1^{\forceQ_{\alpha+1}}$  follows in a similar way. Assume not,
then there is an $n \in \omega$ and $\{T_i \, : \, i \in n \} \subset \vec{T}^{>\alpha+\omega_1}$, and $T:=T_0 \times ... \times T_{n-1}$ is not a Suslin tree any more in $W_1^{\forceQ_{\alpha+1}}$, hence $T$ does not have the ccc in $W_1^{\forceQ_{\alpha+1}}$. But then $\forceQ_{\alpha} \ast (\dot{\forceQ}_{\alpha} \times T)$ is not ccc which contradicts our inductive hypothesis and Lemma \ref{ccc Coding}.

Assume now that $\alpha$ is a limit ordinal and that the assertion is true for every $\beta < \alpha$. First note that $\forceQ_{\alpha}$ must have the ccc by our inductive hypothesis  and as we use finite support.

To show that $\vec{T}^{>\alpha}$ remains independent in $W_1^{\forceQ_{\alpha}}$, assume the opposite. Thus there is an $n \in \omega$ and $\{T_i \, : \, i \in n \} \subset \vec{T}^{>\alpha}$, and $T:=T_0 \times ... \times T_{n-1}$ is not a Suslin tree in $W_1^{\forceQ_{\alpha}}$. In particular $\forceQ_{\alpha} \times T$ does not have the ccc.  By our inductive hypothesis for very $\beta < \alpha$, $T$ is Suslin in $W_1^{\forceQ_{\beta}}$, hence $\forceQ_{\beta} \times T$ is ccc, thus the direct limit of $(T \times \forceQ_{\beta})_{\beta < \alpha}$, which is isomorphic to $T \times \forceQ_{\alpha}$ must have the ccc as well. This is a contradiction as $T \times \forceQ_{\alpha} = \forceQ_{\alpha} \times T$.

\end{proof}
As a consequence the iteration $(\forceQ_{\alpha} \, : \, \alpha \le \omega_2)$ is a finite support iteration of forcings with the countable chain condition, thus $\forceQ_{\omega_2}$ has the countable chain condition as well.
This, plus the fact that $W_1\models 2^{\aleph_0} = 2^{\aleph_1} = \aleph_2$  readily gives that for every $\alpha < \omega_2$, if $H_{\alpha}$ denotes the generic filter for $\forceQ_{\alpha}$, then $W_1[H_{\alpha}] \models 2^{\aleph_1} = \aleph_2$. Indeed if $X \in W_1[H_{\alpha}] \cap P(\omega_1)$, then $X$ has a $\forceQ_{\alpha}$-name which is uniquely determined by a sequence $(A_{\eta} \, : \, \eta < \omega_1)$ such that every  $A_{\eta} \subset \forceQ_{\alpha}$ is a maximal antichain. There are $[\aleph_2]^{\aleph_0} = \aleph_2$-many antichains in $\forceQ_{\alpha}$, thus there are $\aleph_2$-many $\forceQ_{\alpha}$-names for subsets of $\omega_1$. As a consequence we will catch our tail after $\omega_2$ many stages.

\begin{Lemma}
Let $H_{\omega_2}$ be an $W_1$-generic filter for $(\forceQ_{\alpha} \, : \, \alpha < \omega_2)$. Then in $W_1[H_{\omega_2}]$, every stationary subset of $\omega_1$ is coded into an $\omega_1$-block of $\vec{T}$, i.e. for each $S$ stationary there is an $\alpha < \omega_2$ such that for every $\beta < \omega_1$ ( $\vec{T}^{\alpha}_{\beta}$ has a branch if and only if $\beta \in S$ and $\vec{T}^{\alpha}_{\beta}$ is special if and only if $\beta \notin S$). Here we write  $\vec{T}^{\alpha}_{\beta}$ for the $\beta$-th element of the $\alpha$-th $\omega_1$-block of Suslin trees $\vec{T}^{\alpha}$ from $\vec{T}$.
\end{Lemma}
Recall that as $W_1[H_{\omega_2}]$ is a ccc extension of $W_0$, $H(\omega_2)^{W_0}$ is definable in $W_1[H_{\omega_2}]$ via the formula \begin{itemize}
\item[] $x \in H(\omega_2)^{W_0}$ if and only if $\exists M (M$ is presuitable and $x \in M)$.
\end{itemize}
As $\vec{T}$ is definable over $H(\omega_2)^{W_0}$, $\vec{T}$ is definable in $W_1[H_{\omega_2}]$ as well and so it can see the pattern that was written into $\vec{T}$. This hands us a new, definable predicate for stationary subsets of $\omega_1$. Counting quantifiers yields that the statement of the last Lemma is $\Sigma_2(\vec{C})$ over $H(\omega_2)$. Using the $\Sigma_1(\vec{C}, \vec{T}^0)$-definable set of suitable models from Lemma \ref{sigma_1 set of suitable models} will yield a $\Sigma_1(\vec{C}, \vec{T}^0)$-definition for stationarity in $W_1[H_{\omega_2}]$.
\begin{Lemma}
There is a $\Sigma_1(\vec{C}, \vec{T}^0)$-formula $\Psi(S)$ which defines stationary subsets of $W_1[H_{\omega_2}]$:
\begin{itemize}
\item[$\Psi(S)$] if and only if there is an $\aleph_1$-sized, transitive model $N$ which contains $\vec{C}$ and $\vec{T}^0$ such that $N$ models that
\begin{itemize}
\item There exists a real $x$ such that $\Phi(x)$ holds, i.e. $x$ is a code for a suitable model $M$.
\item There exists an ordinal $\alpha$ in the suitable model $M$ such that $\vec{T}'$ is the $\alpha$-th $\omega_1$ block of the definable sequence of independent Suslin trees as computed in $M$ and $N$ sees a full pattern on $\vec{T}'$.
\item $\forall \beta < \omega_1$ $( \beta \in S$ if and only if $\vec{T}' (\beta)$ has a branch).
\item $\forall \beta < \omega_1$ $(\beta \notin S$ if and only if $\vec{T}' (\beta)$ is special).
\end{itemize}
\end{itemize}
Note that $\Psi(S)$ is of the form $\exists N (N \models...)$, thus $\Psi$ is a $\Sigma_1$-formula.
\end{Lemma}

\begin{proof}
We assume first that $S \subset \omega_1$ is a stationary set in $W_1[H_{\omega_2}]$. Then by the last Lemma, $S$ is coded into an $\omega_1$-block of $\vec{T}$. The set of branches and specializing functions which witness that $S$ is coded into $\vec{T}$ is a set of size $\aleph_1$, thus there will be a suitable model $M$ which will contain it. We ensured that this suitable $M$ is coded by a real $x$ in $W_1[H_{\omega_2}]$. Then any uncountable, transitive $N$ which contains $r$ is a witness for $\Psi(S)$.

If, on the other hand $\Psi(S)$ is true then the suitable model $M$ which contains $S$ and witnesses that the characteristic function of $S$ is written into an $\omega_1$-block of its local $M$-Suslin trees is also a witness that the characteristic function of $S$ can be found in some block of $\vec{T}$ in the real world $W_1[H_{\omega_2}]$. This suffices as by definition of the iteration$ (\forceQ_{\alpha}\, : \, \alpha<  \omega_2)$, the patterns on $\vec{T}$ code up stationarity on $\omega_1$.
\end{proof}
This finishes the proof of the main theorem. We add as a remark that the coding procedure is actually more general and is not restricted to code $\NS$. Indeed, fixing $W_0$ as our ground model, we can pick any subfamily $\mathcal{F}$ of $P(\omega_1)$ and can code its members into the independent sequence $\vec{T}$. If $\mathcal{F}$ is definable and is preserved under ccc extensions (such as $\mathcal{F}= \NS^{+}$), then our proof works with almost no alterations. 

\section{Forcing over $M_1$}
As stated already in the beginning of this paper, one can apply the just introduced coding method to the canonical inner model with one Woodin cardinal $M_1$, and obtain better results in terms of the parameters which are used in the definition of stationarity. We will introduce some of the properties of $M_1$ which are crucial for our needs, but assume from this point on that the reader is familar with the basic notions of inner model theory. Recall that $M_1$ is a proper class premouse containing a Woodin cardinal (see \cite{Steel2}, pp. 81 for a definition of $M_1)$. Every initial segment $\mathcal{J}^{M_1}_{\beta}$ is $\omega$-sound and 1-small, where we say that a premouse $\mathcal{M}$ is 1-small iff whenever $\kappa$ is the critical point of an extender on the $\mathcal{M}$-sequence then
\[ \mathcal{J}^{\mathcal{M}}_{\kappa} \models \lnot \exists \delta (\delta \text{ is Woodin}).\]
The reals of $M_1$ admit a $\Sigma^{1}_3$-definable wellorder (see \cite{Steel2}, Theorem 4.5), the definition of the wellorder makes crucial use of a weakened notion of iterability, the so-called $\Pi^1_2$-iterability which we shall introduce.

Let $\mathcal{M}$ be a premouse, $\mathcal{T}$ be an $\omega$-maximal iteration tree $b$ a branch through $\mathcal{T}$ and $\alpha$ an ordinal. Then $b$ is $\alpha$-good if, whenever $\mathcal{N}=\mathcal{M}^{\mathcal{T}}_b$ or $\mathcal{N}$ is the $\alpha$-th iterate of some initial segment $\mathcal{P} \trianglelefteq \mathcal{M}^{\mathcal{T}}_b$ using a single extender $E$ (and its images under the iteration map) on the $\mathcal{P}$-sequence, then $\alpha$ is in the wellfounded part of $\mathcal{N}.$ Then we say that a premouse $\mathcal{M}$ is $\Pi^1_2$-iterable, if player II has a winning strategy in the game $\mathcal{G}'_{\omega}(\mathcal{M},1)$, where $\mathcal{G}'_{\omega}(\mathcal{M},1)$, is defined just as the ordinary weak two player game $W\mathcal{G}_{\omega}(\mathcal{M},1)$ (see e.g. \cite{Steel3} pp. 65 for a definition), with the exception that player I not only plays an $\omega$-maximal, countable putative iteration tree $\mathcal{T}$ but additionally has to play a countable ordinal $\alpha< \aleph_1^{M_1}$. Then player II does not have to play a wellfounded branch through $\mathcal{T}$ (as it would be the case for iterability), but instead can play a cofinal branch $b$ through $\mathcal{T}$ such that $b$ is $\alpha$-good in order to win.

The winning strategy for II for $\mathcal{G}'_{\omega}(\mouseM,1)$ guarantees that
$\mouseM$ can be compared to any countable premouse which is an initial segment of $M_1$, as we shall prove below. The proof works for any $\omega_1$-preserving generic extension of $M_1$, which is what we need for our purposes.

We briefly introduce a couple of fundamental inner model theoretic notions which will play a crucial role in the arguments to come.

\begin{Definition}
Let $\mathcal{T}$ be a $k$-maximal iteration tree of limit length on some premouse $\mathcal{M}$. Then we let
\[ \delta(\mathcal{T}) = sup \{E^{\mathcal{T}}_{\alpha} \, : \, \alpha < lh(\mathcal{T}  \} \]
Moreover we let 
\begin{align*}
\mathcal{M}(\mathcal{T}):= \text{ unique passsive } \mathcal{P} \text{ such that } Ord^{\mathcal{P}}=\delta(\mathcal{T}) \text{ and} \\
\forall \alpha < \delta(\mathcal{T}) (\mathcal{M}(\mathcal{T}) \text{ agrees with } \mathcal{M}^{\mathcal{T}}_{\alpha} \text{ below } lh(E^{\mathcal{T}}_{\alpha})).
\end{align*}

\end{Definition}

We also need the so-called zipper lemma (see \cite{Steel3}, Theorem 6.10).

\begin{thm}
Let $c$ and $c'$ be distinct cofinal branches of the $k(\le \omega$)-maximal iteration tree $\mathcal{T}$ on the premouse $\mathcal{M}$. Let $A \subset \delta(\mathcal{T})$ and assume that $\delta(\mathcal{T}),A \in $wfp$(\mathcal{M}^{\mathcal{T}}_c) \cap $wfp$(\mathcal{M}^{\mathcal{T}}_{c'})$. 
Then \[ \mathcal{M}^{\mathcal{T}}_c \models \exists \kappa < \delta(\mathcal{T})(\kappa \text{ is } A\text{-strong up to } \delta(\mathcal{T}))\]
\end{thm}
Last we recall what $\mathcal{Q}$-structures are.
\begin{Definition}
Let $\mathcal{M}$ be a premouse, $\mathcal{T}$ be a $k (\le \omega)$-maximal iteration tree on $\mathcal{M}$, let $b$ be a cofinal, wellfounded branch through $\mathcal{T}$ and let $\gamma$ be the least ordinal, if there is one, such that either
\[ \omega \gamma < On^{\mathcal{M}^{\mathcal{T}}_{b}} \text{ and } \mathcal{J}^{\mathcal{M}^{\mathcal{T}}_{b}}_{\gamma+1} \models \delta (\mathcal{T}) \text{ is not Woodin, } \] 
or
\[ \omega \gamma = On^{\mathcal{M}^{\mathcal{T}}_{b}} \text{ and } \rho_{n+1} (\mathcal{J}^{\mathcal{M}^{\mathcal{T}}_{b}}_{\gamma}) < \delta(\mathcal{T})\]
for some $n< \omega$ such that $n+1 \le k$ if $D^{\mathcal{T}} \cap b = \emptyset$. We set 
\[ \mathcal{Q}(b, \mathcal{T}) := \mathcal{J}^{\mathcal{M}_b}_{\gamma} \]
if there is such a $\gamma$, and let $\mathcal{Q}(b, \mathcal{T})$ be undefined else.
\end{Definition}
The next lemma is folklore, but we could not find a proof in the literature, this is why we include it here\footnote{We thank F. Schlutzenberg for several discussions on the topic which were very valuable.}.
\begin{Lemma}
Let $\mathcal{M}$ and $\mathcal{N}$ be $\omega$-sound premice which both project to $\omega$. Assume that $\mathcal{M}$ is an initial segment of $M_1$ and $\mathcal{N}$ is $\Pi^1_2$-iterable, and let $\Sigma$ denote the winning strategy for player II in $\mathcal{G}_{\omega} (\mathcal{M}, \omega_1+1)$. Then we can successfully compare $\mathcal{M}$ and $\mathcal{N}$ and consequentially $\mathcal{M} \triangleleft \mathcal{N}$ or $\mathcal{N} \trianglelefteq \mathcal{M}$.
\end{Lemma}
\begin{proof}
We compare $\mathcal{M}$ and $\mathcal{N}$ via iterating away the least disagreement, producing iteration trees $\mathcal{T}$ on the $\mathcal{M}$-side and $\mathcal{U}$ on the $\mathcal{N}$-side. We shall show that at every limit stage $\beta\le \omega_1$, there is a cofinal and wellfounded branch for each side and then use the usual finestructural argument to finish the proof. At limit stages, if $\mathcal{T}$ is the tree we have created so far on the $\mathcal{M}$-side, we pick the branch $b$ and $\mathcal{M}^{\mathcal{T}}_b$ according to the winning strategy $\Sigma$. 
We shall describe the strategy $\Gamma$ for player II on $\mathcal{N}$ which ensures a successful comparison of $\mathcal{M}$ and $\mathcal{N}$. For that it is sufficient to determine which branch $c$ player II should pick at limit length trees $\mathcal{U}$ such that the resulting model $\mathcal{M}^{\mathcal{U}}_c$ is wellfounded.

Assume that we arrive at a countable limit stage $\gamma=lh(\mathcal{T})=lh(\mathcal{U})$, where $\mathcal{T}$ and $\mathcal{U}$ denote the trees we have created so far and assume first that $\delta(\mathcal{T})=\delta(\mathcal{U})$, where $\delta(\mathcal{T})=sup \{lh(E^{\mathcal{T}}_{\alpha}) \, : \, \alpha < \gamma\}$.
As $\mathcal{M} \triangleleft M_1$, and the latter is 1-small, we know that $\mathcal{M}^{\mathcal{T}}_b$ is 1-small as well. Consequentially, there is a $\mathcal{Q}$-structure $\mathcal{Q}(b,\mathcal{T}) \trianglelefteq \mathcal{M}^{\mathcal{T}}_b$, which has to be of the form $L_{\alpha}(\mathcal{M}(\mathcal{T}))$, for some $\alpha < \omega_1$, as by 1-smallness you can not have another extender on the sequence until you destroy the Woodin cardinal. Let $\alpha$ be the ordinal height of that $\mathcal{Q}$-structure. Note that there is failure of $\delta(\mathcal{T})$ being Woodin which is definable over $L_{\alpha}(\mathcal{M}(\mathcal{T}))$ by definition. 

We shall now define which branch $c$ player II shall pick in $\mathcal{U}$.
As $\mathcal{N}$ is $\Pi^1_2$-iterable, we know that there is a branch $c$ through $\mathcal{U}$, such that $\alpha$ is in the wellfounded part of $\mathcal{M}^{\mathcal{U}}_c$ (or in a linear iteration of one $\mathcal{M}^{\mathcal{U}}_c$'s extenders, but we suppress this case as it will eventually result in exactly the same argument using the elementary embedding of the linear, iterated ultrapower).
As $\mathcal{T}$, $\mathcal{U}$ are the result of iterating away the least disagreement and $\delta(\mathcal{T})=\delta(\mathcal{U})$, we obtain that $\mathcal{M}^{\mathcal{T}}_b | \delta(\mathcal{T})= \mathcal{M}^{\mathcal{U}}_c | \delta(\mathcal{U})= \mathcal{M(\mathcal{T}})=\mathcal{M}(\mathcal{U})$, and the $\mathcal{Q}(b,\mathcal{T})$-structure $L_{\alpha}(\mathcal{M}({\mathcal{T}}))$ must also be contained in the wellfounded part of $\mathcal{M}^{\mathcal{U}}_c$. Hence there is a witness to $\delta(\mathcal{T})$ not being Woodin which is definable over $L_{\alpha}(\mathcal{M}({\mathcal{T}}))$, so the $\mathcal{Q}(c, \mathcal{U})$-structure must equal $  L_{\alpha}(\mathcal{M}({\mathcal{T}}))$ as long as $\mathcal{M}^{\mathcal{U}}_c \cap Ord> \alpha$ (if $\mathcal{M}^{\mathcal{U}}_c \cap Ord= \alpha$, we are done as $c$ will be wellfounded). This argument is uniform for the branch $c$ we pick so that $L_{\alpha}(\mathcal{M}({\mathcal{T}}))$ is also the $\mathcal{Q}(c',\mathcal{U})$-structure for any other sufficiently wellfounded branch $c'$ through $\mathcal{U}$.

We claim that $\mathcal{M}^{\mathcal{U}}_c$ is fully wellfounded. Indeed if we let $\alpha' > \alpha$ be countable, then there must be a branch $c'$ through $\mathcal{U}$ such that $c'$ is $\alpha'$-good. But now $c'=c$, as otherwise the two distinct branches $c$ and $c'$ through $\mathcal{U}$ 
will witness that $\delta(\mathcal{T})=\delta(\mathcal{U})$ is $A$-strong in $\mathcal{M}^{\mathcal{U}}_{c'}$ up to $\delta(\mathcal{T})$ for every $A \subset \delta(\mathcal{T})$, $A \in wfp(\mathcal{M}^{\mathcal{U}}_c) \cap wfp(\mathcal{M}^{\mathcal{U}}_{c'})$. But this contradicts the fact that there is a witness definable over $L_{\alpha}(\mathcal{M}_b^{\mathcal{T}})$ to the fact that $\delta(\mathcal{T})$ is not Woodin. This ends the discussion of the case where $\delta(\mathcal{T})=\delta(\mathcal{U})$ being countable.

We shall now consider the second case, where one side stops to use extenders at some point in the comparison process.
If it is the $\mathcal{N}$-side, which stops playing extenders at some point then there is nothing to show. If it is the $\mathcal{M}$-side who stops playing extenders, then there is a last model $\mathcal{M}_{\beta}$, while on the $\mathcal{N}$-side, $\mathcal{N}(\mathcal{U})$ exists and is contained in $\mathcal{M}_{\beta}$. But then there is a model of the form $L_{\alpha}(\mathcal{M}_{\beta}| \delta(\mathcal{U}))$ which codes a failure of $\delta(\mathcal{U})$ being Woodin, and the model can be used on the $\mathcal{N}$-side just as above to find the unique, wellfounded branch through $\mathcal{U}$.

What remains to show is that player II can find a wellfounded branch through $\mathcal{U}$
after $\omega_1$ many stages.
Assume that this is not the case, then after $\omega_1$-many rounds of the comparison game, there is a putative, $\omega$-maximal iteration tree $\mathcal{U}$ on $\mathcal{N}$ with no wellfounded branch.
We force with the L\'evy collapse $Col(\omega, \omega_1)$, and let $g$ be an $(M_1,Col(\omega, \omega_1))$-generic filter. As $\mathcal{N} \in M_1$ is $\Pi^1_2$-iterable, which is a $\Pi^1_2$-notion, we know that $\mathcal{N}$ is $\Pi^1_2$-iterable in $M_1[g]$ as well. Thus, in $M_1[g]$, there is a cofinal wellfounded branch for $\mathcal{U}$ and it has to be the unique one, by the argument above. Consequentially the branch $b$ is ordinal definable in $M_1[g]$ using $\mathcal{N}$, $\mathcal{M}$, $\mathcal{U}$, $ \mathcal{T}$ as parameters, and by the homogeneity of the L\'evy collapse, $b \in M_1$ as well, which is what we wanted to show.

To finish the proof, we have to show that $\mathcal{N} \trianglelefteq \mathcal{M}$ or vice versa. This is just a repetition of the finestructural fact that sufficiently iterable, $\omega$-sound and $\omega$-projecting premice line up (see \cite{Steel3}, Corollary 3.12).

As both premice must be countable, the above argument, together with the proof of the comparison lemma (see \cite{Steel3}, Theorem 3.11)  shows that we can successfully compare them in a $< \omega_1$-long process, while ensuring that the trees $\mathcal{T}$ on $\mathcal{M}$ and $\mathcal{U}$ on $\mathcal{N}$ have last models $\mathcal{M}_{\alpha}$ and $\mathcal{N}_{\beta}$ such that one is an initial segment of the other and such that the branch leading up to the shorter one does not drop in model or degree. Let us assume without loss of generality, that $\mathcal{M}_{\alpha} \trianglelefteq \mathcal{N}_{\beta}$ and $[0,\alpha]$ does not drop in model or degree. Then, as $\mathcal{M}$ projects to $\omega$, there are no extenders over $\mathcal{M}$ with critical point $<\rho_{\omega} (\mathcal{M})$, hence if $\alpha$ would be greater than 0, there would be a drop in model or degree in $[0, \alpha]$ which is a contradiction. So $\alpha=0$, and we turn to $\beta$. If $\beta \ne 0$, then, as $\rho_{\omega}(\mathcal{N})= \omega$, $\mathcal{N}_{\eta}$ is not $\omega$-sound, and consequentially $\mathcal{N}_{\beta}$ is not $\omega$-sound, hence $\mathcal{M}_{\alpha}$ is a proper initial segment of $\mathcal{N}_{\beta}$ and is countable there, as $\rho_{\omega}(\mathcal{M})=\omega$ and by soundness. But then the iteration from $\mathcal{N}$ leading up to $\mathcal{N}_{\beta}$ will not move $\mathcal{M}$, and $\mathcal{M}$ is an initial segment of $\mathcal{N}$, which is what we wanted.
 
\end{proof}

It is relatively straightforward to check that the set of reals which code $\Pi^1_2$-iterable, countable premice is itself a $\Pi^1_2$-definable set in the codes (see \cite{Steel2}, Lemma 1.7). Modulo the last lemma, this implies that there is a nice definition of a cofinal set of countable initial segments of $M_1$  in $\omega_1$-preserving
forcing extensions $M_1[G]$ of $M_1$, (in fact this definiton holds in all outer models of $M_1$ with the same $\omega_1$):

\begin{Lemma}
Let $M_1[G]$ be an $\omega_1$-preserving forcing extension of $M_1$. Then in $M_1[G]$ there is
$\Pi^{1}_2$-definable set $\mathcal{I}$ of premice  which are of the form $\mathcal{J}^{M_1}_{\eta}$ for 
some $\eta< \omega_1$. $\mathcal{I}$ is defined as
$$\mathcal{I}:= \{ \mouseM \text{ ctbl premouse} \, : \,\mouseM \text{ is } \Pi^{1}_2\text{-iterable}, \, 
\omega\text{-sound} \text{ and projects to } \omega \},$$
and the set
$$\{ \eta < \omega_1 \, : \, \exists \mouseN \in \mathcal{I} (\mouseN = \mathcal{J}^{M_1}_{\eta})\}$$ is cofinal in $\omega_1$.
\end{Lemma}
In particular $M_1| \omega_1$ is $\Sigma_1(\omega_1)$-definable in $\omega_1$-preserving generic extensions of $M_1$, as $x \in M_1 | \omega_1$ if and only if there is a transitive $U \models \ZFP$, $\omega_1 \subset U$, $\aleph_1^U=\aleph_1$ such that $U \models \exists \mathcal{M} \in \mathcal{I} \land x \in \mathcal{M}$, which suffices using Shoenfield absoluteness. A similar argument also shows that $\{ M_1 | \omega_1\}$ is $\Sigma_1(\omega_1)$ definable, as we can successfully compute it in transitive $\omega_1$-containing models, via the following $\Sigma_1(\omega_1)$-formula:
 \begin{align*}
(\ast) \quad X=M_1|\omega_1 \Leftrightarrow  \exists U (&U \text{ is a transitive model of } \ZFP \land \omega_1 \subset U
\land \\& U \models \forall \alpha < \omega_1 \exists r \in \mathcal{I} (\alpha \in (r \cap Ord)) \land \\& \, \,\qquad X \text{ is transitive and } X \cap Ord=\omega_1 \land 
\\& \qquad \qquad \forall x \in \mathcal{I} (x \subset X) \land \forall y \in X \exists x \in \mathcal{I} (y \in x))
\end{align*} 
Indeed, if the left hand side of $(\ast)$ is true, then any transitive $U$ which contains $M_1 | \omega_1$ as an element and which models $\ZFP$ will witness the truth of the right hand side, which is an immediate consequence of Shoenfield absoluteness.

If the right hand side is true, then, using the fact that $\Sigma^1_3$-statements are upwards absolute between $U$ and the real world, $U$ will contain an $\omega_1$-height, transitive structure $X$ which contains all countable initial segments of $M_1$, and such that every $y \in X$ is included in some element of $M_1| \omega_1$, in other words $X$ must equal $M_1 | \omega_1$.

We shall now apply the just obtained definability results to show that, once we repeat the coding procedure described in the earlier sections of the paper, with $M_1$ as the ground model, we can just use $\omega_1$ as parameter. For that it will be sufficient to show, that there are $\Sigma_1(\omega_1)$-definitions of an $\omega$-sequence of independent Suslin trees and a ladder system.

The first thing to note is that $M_1 | \omega_1$ can define a $\diamondsuit$-sequence in the same way as $L_{\omega_1}$ can. Indeed, as $M_1$ has a $\Delta_3^1$-definable wellorder of the reals whose definition relativizes to $M_1 | \omega_1$ we can repeat Jensen's original proof in $M_1$ to construct a candidate for the $\diamondsuit$-sequence, via picking at every limit stage $\alpha< \omega_1$ the $<_{M_1}$-least pair $(a_{\alpha}, c_{\alpha}) \in P(\alpha) \times P(\alpha)$ which witnesses that the sequence we have created so far is not a $\diamondsuit$-sequence. The proof that this defines already a witness for $\diamondsuit$ is finished as usual with a condensation argument. Hence we shall show that if $\mathcal{J}^{M_1}_{\beta}$ is least such that $(a_{\alpha} \, : \, \alpha< \omega_1) $ and $(A,C) \in \mathcal{J}^{M_1}_{\beta}$, where $(A,C)$ is the $<_{M_1}$-least witness for $(a_{\alpha})_{\alpha < \omega_1}$ not being a $\diamondsuit$-sequence, then there is an countable $N \prec \mathcal{J}^{M_1}_{\beta}$ such that the transitive collapse $\bar{N}$ is an initial segment of $M_1$. 

To see that in fact every such $N$ collapses to an initial of $M_1$, recall
the condensation result as in \cite{Steel3}, Theorem 5.1, which we can state in our situation as follows:
\begin{thm}
Let $\mathcal{M}$ be an initial segment of $M_1$. Suppose that $\pi: \bar{N} \rightarrow \mathcal{M}$ is the inverse of the transitive collapse and $crit(\pi)=\rho^{\bar{N}}_{\omega}$, then either
\begin{enumerate}
\item $\bar{N}$ is a proper initial segment of $\mathcal{M}$, or
\item there is an extender $E$ on the $\mathcal{M}$-sequence such that
$lh(E)=\rho^{\bar{N}}_{\omega}$, and $\bar{N}$ is a proper initial segment of $Ult_0(\mathcal{M},E)$.
\end{enumerate}
\end{thm}
We shall argue, that in our situation, the second case is ruled out, hence every $N \prec \mathcal{J}^{M_1}_{\beta}$ collapses to an initial segment of $M_1$. Indeed, due to the $\omega$-soundness of $\mathcal{J}^{M_1}_{\beta}$, every $N \prec \mathcal{J}^{M_1}_{\beta}$ will satisfy that\[ \rho_{\omega}^N= \rho_{\omega}^{\mathcal{J}^{M_1}_{\beta}}=\omega_1^{\mathcal{J}^{M_1}_{\beta}},\] hence $crit(\pi)= \omega_1^{\bar{N}}= \rho^{\bar{N}}_{\omega}$ by elementarity of $\pi$.

But $\bar{N} | \omega_1^{\bar{N}}= N | \omega_1^{\bar{N}}$, and as 
$\bar{N} | \omega_1^{\bar{N}}$ thinks that $\omega$ is its largest cardinal, 
$N | \omega_1^{\bar{N}}$ must believe this as well. But then there can not be an extender on the $N$-sequence which is indexed at $\omega_1^{\bar{N}}$, as otherwise $N | \omega_1^{\bar{N}}$ would think that $\omega_1^{\bar{N}}$ is inaccessible, which is a contradiction.
Hence, the condition $lh(E)=\rho_{\omega}^{\bar{N}}$ is impossible and all that is left is case 1, so $\bar{N}$ is an initial segment of $M_1$.

This shows that Jensen's construction of a $\diamondsuit$-sequence succeeds when applied to $M_1$. It is straightforward to verify that the recursive construction can be carried out in $M_1 | \omega_1$ by absoluteness. Consequentially the $\diamondsuit$-sequence is a $\Sigma_1$-definable class over $M_1 | \omega_1$.

We can use the $\diamondsuit$-sequence to construct an independent $\omega$-sequence of Suslin trees
due to a result of Jensen.

\begin{Definition}
 Let $T$ be a tree and $a \in T $ be a node, then $T_a$ denotes the tree $\{ x\in T \, : \, x>_Ta \}$.
 A Suslin tree $T$ is called full if for any level $\alpha$ and any finite sequence of nodes
 $a_0,...,a_n$ on the $\alpha$-th level of $T$, the tree $T_{a_0} \times T_{a_1}\times \cdot \cdot \cdot \times T_{a_n}$
 is a Suslin tree again.
\end{Definition}
A proof of the next result can be found in \cite{Handbook of topology}, Theorem 6.6.
\begin{thm}
 $\diamondsuit$ implies the existence of a full Suslin tree. Consequentially if $\diamondsuit$ holds
 then there is an $\omega$-length sequence of Suslin trees $\vec{T} =\{ T_0, T_1,... \}$ such that 
 any finite product of members of $\vec{T}$ is a Suslin tree again.
\end{thm}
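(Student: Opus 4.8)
The plan is to construct the full Suslin tree $T$ directly by recursion on its levels, using $\diamondsuit$ in the now-standard way to seal maximal antichains, but applied not only to $T$ itself but simultaneously to all finite products of cones $T_{a_0} \times \cdots \times T_{a_n}$ with $a_0, \ldots, a_n$ on a common level. I would build $T$ as a normal $\omega_1$-tree: at successor stages every node is split into at least two immediate successors, and at a limit stage $\alpha$ the level $T_\alpha$ is obtained by selecting cofinal branches through $T \upharpoonright \alpha$ and adjoining their limits, always arranging that every node of $T \upharpoonright \alpha$ lies on at least one selected branch, which keeps the tree normal with countable levels. The whole content of the argument is concentrated in how the branches are chosen at limit stages.

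First I would fix, via a bijection of $H(\omega_1)$ with $\omega_1$ and a $\diamondsuit$-sequence, a guessing sequence $\langle D_\alpha : \alpha < \omega_1 \rangle$ so that at a limit stage $\alpha$ the object $D_\alpha$ codes a finite tuple $a_0, \ldots, a_n$ of nodes lying below level $\alpha$ on a common level together with a subset $A$ of the product $P \upharpoonright \alpha := (T_{a_0}\upharpoonright\alpha) \times \cdots \times (T_{a_n}\upharpoonright\alpha)$. The usual $\diamondsuit$ reflection then guarantees that for every genuine finite product of cones $P$ and every maximal antichain $A^* \subseteq P$, the set of $\alpha$ at which $D_\alpha$ correctly codes $A^* \cap (P \upharpoonright \alpha)$ and at which this restriction is already a maximal antichain of $P \upharpoonright \alpha$ is stationary; the latter clause holds on a club by the standard closure argument.

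At a limit stage $\alpha$ where $D_\alpha$ codes such a pair, I would seal $A$: the level $T_\alpha$ must be chosen so that every tuple of new nodes $(b_0, \ldots, b_n)$ with $b_i$ extending $a_i$ lies above some element of $A$, for then no node of $P$ on level $\alpha$ or higher can be incompatible with all of $A$, and $A$ stays maximal and hence countable. This is the main obstacle, because the distinct coordinates interact: one must choose the cofinal branches through the cones $T_{a_i}\upharpoonright\alpha$ so that \emph{every} combination of them, read as a node of the product, threads through $A$. I would handle this by an $\omega$-step simultaneous construction that enumerates the countably many product-tuples of $P \upharpoonright \alpha$ and, using the maximality of $A$ in $P \upharpoonright \alpha$ (so that each such tuple has a common extension with some element of $A$), steers the finitely many relevant coordinate-branches through successive elements of $A$; the incomparability of $a_0, \ldots, a_n$, which follows from their sharing a level, means that a single branch meets at most one cone, and this is what makes the bookkeeping over all coordinate combinations converge. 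At limit stages with no usable guess I would simply add one cofinal branch through each node.

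Finally I would verify fullness. Given any finite product of cones $P$ and a maximal antichain $A^*$ of $P$, pick $\alpha$ from the stationary set above at which $A^* \cap (P \upharpoonright \alpha)$ was sealed; by construction $A^* = A^* \cap (P \upharpoonright \alpha)$ is contained in $P \upharpoonright \alpha$ and hence countable, so $P$ is ccc. Since the projections of any chain of $P$ are chains of the normal ccc tree $T$, which are countable, $P$ has no uncountable chain either, whence $P$ is Suslin. In particular $T$ itself is Suslin and every product of cones at same-level nodes is Suslin, i.e.\ $T$ is full. For the stated consequence I would fix any level $\alpha \ge \omega$, choose $\omega$-many distinct nodes $a_0, a_1, \ldots$ on it, and set $T_n := T_{a_n}$; since all the $a_n$ lie on the common level $\alpha$, fullness immediately gives that every finite product $\prod_{i \in e} T_i$ is again Suslin, as required.
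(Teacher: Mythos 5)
Your proposal is correct, and it is essentially the argument the paper relies on: the paper gives no proof of this theorem but cites Jensen's construction (via Todorcevic's Handbook chapter, Thm.~6.6), which is exactly your $\diamondsuit$-guided recursion that seals guessed maximal antichains of finite products of cones at limit levels by a fusion threading every combination of the chosen cofinal branches through the guessed antichain. The only point worth making explicit is that the nodes $a_0,\dots,a_n$ in the definition of fullness must be taken pairwise distinct (as your appeal to their incomparability already presupposes), since $T_a\times T_a$ is never ccc for a Suslin tree.
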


The above proof, which is a refinement of Jensen's original construction of a Suslin tree in that one recursively picks (using the definable wellorder) at limit stages branches through $T$ which are generic for finite products over the least countable initial segment of $M_1$ which is able to see the construction up to that point, in fact relativizes down to $M_1 | \omega_1$, as the $\Delta^1_3$-definition of the wellorder of the $M_1$-reals can be applied inside $M_1 | \omega_1$, and the computation will always be correct. Hence, over $M_1 | \omega_1$ one can always define a full Suslin tree just as in $M_1$ and hence an $\omega$-sequence of independent Suslin trees. 
If we recall the proof of the main theorem, we see that the parameter $\vec{T}^0$ can be replaced by $M_1 | \omega_1$ as the latter can compute such an independent $\omega$-sequence of Suslin trees.

The second parameter in the statement of the theorem, namely the ladder system $\vec{C}$ can be replaced by $M_1 | \omega_1$ as well, for $M_1 | \omega_1$ can compute a canonical ladder system with the help of the $M_1$-wellorder of the reals. Thus, if we start with $M_1$ as the ground model, we end up with a universe where $\NS$ is saturated and $\Delta_1(M_1 | \omega_1)$-definable, and one can use $(\ast)$ to replace
the parameter $M_1 | \omega_1$ with just $\omega_1$.
We finally obtained a proof of the main theorem of this paper.

\begin{thm}
Suppose that the canonical inner model with one Woodin cardinal, $M_1$ exists. Then there is a model in which $\NS$ is saturated and $\Delta_1(\omega_1)$-definable.
\end{thm}
We end with a couple of open questions.

\begin{Question}

Is it consistent that $\NS$ is saturated, boldface $\Delta_1$-definable over $H(\omega_2)$ and $\MA$ does hold? How about other forcing axioms as $\MRP$?

\end{Question}

\begin{Question}
Is it consistent that $\NS$ is saturated, boldface $\Delta_1$-definable over $H(\omega_2)$ and a projective wellorder of the reals exists?
\end{Question}

\section*{Acknowledgement}

We thank the editor for his patience. Most of all we thank the amazing, anonymous referee who put a huge amount of effort into helping us with  turning an impressionistic sketch into a rigorous proof, pointing out numerous errors and inaccuracies.

\end{document}